\newcommand{\R}{{\mathbb R}}
\newcommand{\N}{{\mathbb N}}
\newcommand{\Z}{{\mathbb Z}}
\newcommand{\U}{{\mathbb U}}
\newcommand{\M}{{\mathbb M}}
\newcommand{\B}{{\mathbb B}}
\newcommand{\F}{{\mathcal F}}
\newcommand{\esp}{\mathbb{E}}
\newcommand{\Var}{\operatorname{Var}}
\newcommand{\Cov}{\operatorname{Cov}}
\newcommand{\ind}{\mathds{1}}
\def\ip#1{\left\langle#1\right\rangle}
\def\sip#1{\langle#1\rangle}
\renewcommand{\P}{\mathbb{P}}
\newtheorem{theorem}{Theorem}
\newtheorem{proposition}{Proposition}
\newtheorem{lemma}{Lemma}
\theoremstyle{definition}
\newtheorem{remark}{Remark}
\newcommand{\comment}[1]{}
\def\limn{\lim_{n\to\infty}}
\def\limsupn{\limsup_{n\to\infty}}
\def\mfa{\text{ for all }}
\def\vv#1{{\boldsymbol #1}}
\def\vva{{\vv a}}
\def\vvs{{\vv s}}
\def\vvt{{\vv t}}
\def\vvdelta{{\vv\delta}}
\def\indd#1{{\ind}_{\{#1\}}}
\def\inddd#1{{\ind}_{\left\{#1\right\}}}
\def\mmas{\text{ as }}
\def\proba{\mathbb P}
\def\esp{\mathbb E}
\def\pp#1{\left(#1\right)}
\def\bb#1{\left[#1\right]}
\def\ccbb#1{\left\{#1\right\}}
\def\abs#1{\left|#1\right|}
\def\summ#1#2#3{\sum_{#1=#2}^{#3}}
\def\sif#1#2{\sum_{#1=#2}^\infty}
\def\prodd#1#2#3{\prod_{#1=#2}^{#3}}
\def\mmid{\;\middle\vert\;}
\def\wt#1{\widetilde{#1}}
\def\wb#1{\overline{#1}}
\def\topp#1{^{(#1)}}
\def\mand{\mbox{ and }}
\def\qmwith{\quad\mbox{ with }\quad}
\def\calS{{\mathcal S}}
\def\calM{{\mathcal M}}
\def\calN{{\mathcal N}}
\def\calY{{\mathcal Y}}
\def\eqfdd{\stackrel{f.d.d.}=}
\def\weakto{\Rightarrow}
\def\fddto{\stackrel{f.d.d.}\rightarrow}
\def\mmod{\ {\rm mod }\ }
\def\id{{\rm id}}
\newcommand{\eqnh}{\begin{eqnarray*}}
\newcommand{\eqne}{\end{eqnarray*}}
\newcommand{\eqnhn}{\begin{eqnarray}}
\newcommand{\eqnen}{\end{eqnarray}}
\newcommand{\equh}{\begin{equation}}
\newcommand{\eque}{\end{equation}}
\def\floor#1{\left\lfloor #1\right\rfloor}
\begin{document}
\title[Infinite urn schemes and sssi stable processes]{From infinite urn schemes \\to self-similar stable processes}

\author{Olivier Durieu}
\address{
Olivier Durieu\\
Institut Denis Poisson, UMR CNRS 7013,
Universit\'e de Tours, Parc de Grandmont, 37200 Tours, France.
}
\email{olivier.durieu@univ-tours.fr}

\author{Gennady Samorodnitsky}
\address{
Gennady Samorodnitsky\\
School of Operations Research and Information Engineering
and Department of Statistical Science \\
Cornell University\\
220 Rhodes Hall\\
Ithaca, NY 14853, USA.
}
\email{gs18@cornell.edu}

\author{Yizao Wang}
\address
{
Yizao Wang\\
Department of Mathematical Sciences\\
University of Cincinnati\\
2815 Commons Way\\
Cincinnati, OH, 45221-0025, USA.
}
\email{yizao.wang@uc.edu}

\date{\today}

\keywords{Infinite urn scheme, regular variation, functional central limit theorem, self-similar process, stable process}
\subjclass[2010]{Primary, 60F17; 
 Secondary,  60G18, 
 60G52} 

\begin{abstract}We investigate the randomized Karlin model with parameter $\beta\in(0,1)$, which is based on an infinite urn scheme. It has been shown before that when the randomization is bounded, the so-called odd-occupancy process scales to a fractional Brownian motion with Hurst index $\beta/2\in(0,1/2)$. We show here that when the randomization is heavy-tailed with index $\alpha\in(0,2)$, then the odd-occupancy process scales to a 
$(\beta/\alpha)$-self-similar symmetric $\alpha$-stable process with stationary increments. 
\end{abstract}
\maketitle

\section{Introduction and main results}

Consider the following  infinite urn scheme. Suppose there is an
infinite number of  urns  labeled by $\N = \{1,2,\dots\}$, all
initially empty. Balls are thrown into the urns randomly one after
another.  At each round, a ball  is thrown independently into the urn with label $k$ with probability $p_k$, with $\sum_{k\ge 1} p_k = 1$. 
This random sampling strategy dates back to at least the 60s \citep{bahadur60number,karlin67central}.
The urns may represent different species in a population of interest,
and in various applications an interesting question is to infer the
population frequencies $(p_k)_{k\ge1}$;  see \citep{gnedin07notes} and
references therein. This  urn scheme has  been extensively
investigated in the literature on  combinatorial stochastic processes as it induces 
the so-called {\em paintbox partition} of $\N$, an infinite
exchangeable random partition; see for example
\citep{pitman06combinatorial}.

Asymptotic results for many  statistics of this urn scheme, and in
particular of the random partition it induces, have been investigated
in the literature (e.g.~\citep{gnedin07notes} and references therein),
including in particular the so-called {\em odd-occupancy process}.  In
the sequel, let $Y_n$ denote the label of the urn that the $n$-th ball
falls into, so $\proba(Y_n = k) = p_k$. Then $(Y_n)_{n\in\N}$ are
i.i.d. random variables, and we let $Y_{n,k} :=\summ i1n\indd{Y_i=k}$ denote the number of balls in the urn with label $k$ after first $n$ rounds. The odd-occupancy process is then
defined as
\[
U^*_n=\sif k1\inddd{Y_{n,k}\ \rm odd}.
\]
This process counts the number of urns that contain an odd number of
balls  after the first $n$  rounds.
An interpretation of this process due to \citet{spitzer64principles}
is as follows. One may associate to each urn a lightbulb, and start
the sampling procedure with all lightbulbs off. Each time a ball falls
in an urn, the corresponding lightbulb changes its status (from off to
on or from on to off). The process $U_n^*$ then represents the total
number of lightbulbs that are on  after the first $n$ rounds. 

\citet{karlin67central} proposed and investigated the aforementioned
model under the following assumptions on $(p_k)_{k\ge 1}$: $p_k$ is
decreasing in $k$  and 
\equh\label{eq:RVp}
\max\{k\ge1\mid p_k\ge1/t\} = t^\beta L(t), \quad t\ge0, \mbox{ for some } \beta\in(0,1),
\eque
where $L$ is a slowly varying function at infinity. 
Among many results, Karlin proved that 
\[
\frac{U_n^*-\esp U_n^*}{\sigma_n}\weakto \calN(0,1)
\]
as $n\to\infty$ with $\sigma_n = 2^{\beta-1}(\Gamma(1-\beta) n^\beta L(n))^{1/2}$. 
Here and in the sequel $\weakto$ denotes weak convergence and
$\calN(0,1)$ stands for the  standard normal distribution. 

We are interested in the {\em randomized version} of the odd-occupancy
process, defined as 
\equh\label{eq:Un}
U_n=\sif k1\varepsilon_k\inddd{Y_{n,k}\ \rm odd},
\eque
where $(\varepsilon_k)_{k\in\N}$ are i.i.d.~symmetric random variables
independent of $(Y_n)_{n\in\N}$. 
This randomization  was recently introduced in
\citep{durieu16infinite}, and it was shown in Corollary 2.8 therein
that with $(\varepsilon_k)_{k\in\N}$ being a sequence of independent
Rademacher ($\pm1$-valued and symmetric)  random variables,  
 with the same normalization $\sigma_n$,
\equh\label{eq:DW16}
\pp{\frac{U_{\floor{nt}}}{\sigma_n}}_{t\in[0,1]}\Rightarrow  2^{-(\beta-1)/2}\pp{\B^{\beta/2}_t}_{t\in[0,1]} \eque
in $D([0,1])$, where $\B^H$ is a standard fractional Brownian motion with Hurst index $H\in(0,1)$, a centered Gaussian process with covariance function
\[
 \Cov(\B^H_s,\B^H_t) = \frac12\pp{s^{2H}+t^{2H}-|t-s|^{2H}}, \quad s,t\ge 0. 
\]
 
The following aspect of the randomization and the resulting functional
central limit theorem is  particularly interesting. For an arbitrary
symmetric distribution of $\varepsilon_1$, the randomized
odd-occupancy process $U_n$ is the partial-sum process for a 
stationary sequence, 
\equh\label{eq:stationary}
U_n = X_1+\cdots + X_n \qmwith X_i = -\varepsilon_{Y_i}(-1)^{Y_{i,Y_i}}, \quad i\in\N,n\in\N.
\eque
Therefore, the infinite urn scheme provides a specific way to generate
a stationary sequence of  random variables whose marginal distribution
is the given symmetric law, and whose  partial-sum process is 
 the randomized odd-occupancy process.
Moreover, at least for the Rademacher marginal distribution, this
stationary sequence $(X_n)_{n\ge1}$ exhibits, in view of the limiting
result \eqref{eq:DW16}, anomalous behavior. Namely, the normalization
$\sigma_n$  has an order of magnitude different  from the ``usual''
$\sqrt n$ normalization needed for partial sums of  i.i.d.~random
variables with the same marginal distribution. Such a behavior
indicates  {\em long-range  dependence} in the stationary sequence
\citep{samorodnitsky16stochastic,pipiras17long,beran13long}. The
long-range dependence, in this case, is due to the underlying random
partition. In particular, the covariance function of $X$ is determined
by the law of the random  partition. 
In general, when $\varepsilon_1$ is symmetric and in the domain of attraction of the normal distribution, it is expected that a fractional Brownian motion still arises in the limit with the same order of scaling as in \eqref{eq:DW16}.

In this paper, we are interested in the randomized odd-occupancy
process when $\varepsilon_1$ has a heavy-tailed
distribution. Specifically, we will assume that $\varepsilon_1$ has
infinite variance and, even more specifically, is in the domain of
attraction of a non-Gaussian stable law. The stationary-process
representation \eqref{eq:stationary} is, clearly, still
valid. However, in a functional central limit theorem one expects now
a symmetric stable process that is self-similar with stationary
increments (we abbreviate the latter two properties as the sssi
property). The only sssi Gaussian process is the fractional Brownian
motion (see e.g.~\citep{pipiras17long}); however there are many different sssi symmetric stable
processes, which often arise in   limit theorems for the  partial sums
of stationary sequences with long-range dependence 
\citep{samorodnitsky16stochastic,samorodnitsky94stable,pipiras17stable}. While
many sssi stable processes have been well investigated in the
literature, new processes in this family are still being discovered
\citep{owada15functional}. One way to classify different sssi stable
processes is via the flow representation, introduced by
\citet{rosinski95structure} and developed by
\citet{samorodnitsky05null}, 
of the corresponding  increment processes
which is, by necessity, stationary. Certain ergodic-theoretical
properties of these flows are invariants for each 
stationary stable process, and processes corresponding to different
types of flows, namely {\em  positive}, {\em conservative null} and
{\em  dissipative}, have drastically different properties. 

The main result of this paper is to show that for the Karlin model
with a heavy-tailed randomization, the scaling limit of the randomized
odd-occupancy process is a  new self-similar symmetric stable process
with stationary increments.  We continue to  
  assume that $(\varepsilon_k)$ are i.i.d. and symmetric. For
  simplicity,
   we will also assume that they are in the normal domain of
  attraction of a symmetric stable law
and  in particular, for some
    $\alpha\in(0,2)$, 
\equh\label{eq:RVepsilon}
\lim_{x\to\infty}\frac{\proba(|\varepsilon_1|>x)}{x^{-\alpha}} =
C_\varepsilon\in(0,\infty). 
\eque
We will define the limiting sssi symmetric $\alpha$-stable
(S$\alpha$S) process in terms of stochastic integrals with respect to
an S$\alpha$S random measure as follows. Let  $(\Omega',\F',\proba')$
be a probability space and $N'$  a standard Poisson
process defined on this space. Let $\calM_{\alpha,\beta}$ be a 
S$\alpha$S random measure on $\R_+\times\Omega'$ with control measure
$\beta r^{-\beta-1}dr\proba'(d\omega')$; we refer the reader to
\citep{samorodnitsky94stable} for detailed information on S$\alpha$S
random measures and stochastic integrals with respect to these
measures. The random measure $\calM_{\alpha,\beta}$ is itself defined
on a probability space $(\Omega,\F,\proba)$, the same probability
space on which the limiting process $\U^{\alpha,\beta}$ 
is defined as
\equh\label{eq:Uab}
\U^{\alpha,\beta}_t := \int_{\R_+\times\Omega'}\indd{N'(tr)(\omega')\ \rm odd}\calM_{\alpha,\beta}(dr,d\omega'),\quad t\ge0.
\eque
(See \eqref{eq:stable_fdd} below for the characteristic function of finite-dimensional distributions of $\U^{\alpha,\beta}$.)

The process $\U^{\alpha,\beta}$ is, to the best of our knowledge,  a
new class of sssi S$\alpha$S  processes, with self-similarity index
$\beta/\alpha$. In particular, we  shall show that its increment
process is driven by a positive flow  \citep{rosinski95structure,samorodnitsky05null}.
The following is the main result of the paper; we use the notation
$\stackrel{f.d.d.}\rightarrow$ for convergence in finite-dimensional distributions. 
\begin{theorem}\label{thm:1}
Under the assumptions~\eqref{eq:RVp} and~\eqref{eq:RVepsilon}, with $b_n = (n^\beta L(n))^{1/\alpha}$,
\[
\pp{\frac{U_{\floor{nt}}}{b_n}}_{t\in[0,1]}\stackrel{f.d.d.}\rightarrow \sigma_\varepsilon\pp{\U^{\alpha,\beta}_t}_{t\in[0,1]},
\]
where $\sigma_\varepsilon^\alpha = C_\varepsilon\int_0^\infty
x^{-\alpha}\sin x\, dx$.
If, in addition, $\alpha\in(0,1)$, then convergence in
distribution in the Skorohod $J_1$-topology on $D([0,1])$ also holds. 
\end{theorem}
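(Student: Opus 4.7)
The approach is to condition on the urn process $\calY:=\sigma(Y_n,n\ge 1)$, exploit the stable-domain condition~\eqref{eq:RVepsilon} on $\varepsilon_1$, and match the resulting limits with the stable-integral definition~\eqref{eq:Uab} of $\U^{\alpha,\beta}$. Let $\phi_\varepsilon$ denote the characteristic function of $\varepsilon_1$; by~\eqref{eq:RVepsilon} and a classical computation one has $1-\phi_\varepsilon(u)=\sigma_\varepsilon^\alpha|u|^\alpha+o(|u|^\alpha)$ as $u\to 0$. Fix $0\le t_1<\cdots<t_m\le 1$ and $\theta_1,\ldots,\theta_m\in\R$, and set
\[
V_{n,k}:=\summ j1m\theta_j\indd{Y_{\lfloor nt_j\rfloor,k}\ \rm odd}, \quad k\ge 1.
\]
Independence of $(\varepsilon_k)$ from $\calY$ and their symmetry yield the conditional factorization
\[
\esp\bb{\exp\pp{i\summ j1m\theta_j\frac{U_{\lfloor nt_j\rfloor}}{b_n}}\mmid\calY}=\prod_{k\ge 1}\phi_\varepsilon\pp{\frac{V_{n,k}}{b_n}}.
\]
Since $|V_{n,k}|\le\sum_j|\theta_j|$ is uniformly bounded, $V_{n,k}/b_n\to 0$ uniformly in $k$, and summing the uniform expansion of $\log\phi_\varepsilon$ reduces the fdd claim to the convergence in probability
\[
S_n:=\frac{1}{b_n^\alpha}\sum_{k\ge 1}|V_{n,k}|^\alpha\longrightarrow I(\vv\theta,\vvt):=\int_0^\infty\esp\bb{\Big|\summ j1m\theta_j\indd{N'(t_jr)\ \rm odd}\Big|^\alpha}\beta r^{-\beta-1}\,dr,
\]
because bounded convergence then turns the conditional ch.f.\ into $\exp(-\sigma_\varepsilon^\alpha I(\vv\theta,\vvt))$, which is precisely the ch.f.\ of $\sigma_\varepsilon\sum_j\theta_j\U^{\alpha,\beta}_{t_j}$ read off from~\eqref{eq:Uab} and the stochastic-integral formula of~\citep{samorodnitsky94stable}.

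The convergence of $S_n$ is best carried out after Poissonizing: replacing the deterministic ball arrivals by those of a unit-rate Poisson process, the parities $(\indd{Y_{s,k}\ \rm odd})_{s\ge 0}$ become independent across $k$, and $\esp|V_{n,k}|^\alpha=F(np_k)$ with
\[
F(\lambda):=\esp\bb{\Big|\summ j1m\theta_j\indd{N'(\lambda t_j)\ \rm odd}\Big|^\alpha}.
\]
The elementary bound $\proba(N'(\lambda t_j)\ {\rm odd})\le\lambda t_j$ yields $F(\lambda)=O(\lambda)$ near zero, while $F$ is bounded by $(\sum_j|\theta_j|)^\alpha$, so that $F(r)r^{-\beta-1}$ is integrable on $(0,\infty)$ for $\beta\in(0,1)$. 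Karamata's theorem, applied to the counting measure $\nu:=\sum_k\delta_{p_k}$ whose tail is $\nu([r,1])\sim r^{-\beta}L(1/r)$ by~\eqref{eq:RVp}, then gives
\[
\esp S_n=b_n^{-\alpha}\sum_k F(np_k)\longrightarrow \beta\int_0^\infty F(r)r^{-\beta-1}\,dr=I(\vv\theta,\vvt).
\]
Concentration of $S_n$ around its mean follows from the same Karamata argument applied to $F_{2\alpha}(\lambda):=\esp|\sum_j\theta_j\indd{N'(\lambda t_j)\ \rm odd}|^{2\alpha}=O(\lambda)$: by the Poissonized independence, $\Var S_n\le b_n^{-2\alpha}\sum_k F_{2\alpha}(np_k)=O(b_n^{-\alpha})\to 0$. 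A routine de-Poissonization step, based on the law of large numbers for the total number of balls arrived by time $n$ in the Poisson model, transfers the conclusion back to the deterministic scheme.

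For tightness in the Skorohod $J_1$ topology under the extra hypothesis $\alpha\in(0,1)$, I would split $U_n=U_n^{(>c)}+U_n^{(\le c)}$ according to whether $|\varepsilon_k|$ exceeds $cb_n$ or not. By~\eqref{eq:RVepsilon}, the point process $\sum_k\delta_{\varepsilon_k/b_n}$ converges to a Poisson random measure on $\R\setminus\{0\}$ with intensity $\alpha C_\varepsilon|x|^{-\alpha-1}\,dx$; coupling this convergence with the urn-process asymptotics used in the fdd step yields $J_1$-convergence of $\spp{U_{\lfloor nt\rfloor}^{(>c)}/b_n}_{t\in[0,1]}$ to the process obtained from $\sigma_\varepsilon\U^{\alpha,\beta}$ by retaining only its jumps of size exceeding~$c$. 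For the small-jump part, picking $p\in(\alpha,1)$ and using subadditivity of $|\cdot|^p$ together with the Karlin-type estimate $\sum_k\proba(Y_{n,k}\ge 1)=O(b_n^\alpha)$ and $\esp|\varepsilon_1|^p\indd{|\varepsilon_1|\le cb_n}=O((cb_n)^{p-\alpha})$, one gets the uniform bound
\[
\esp\sup_{t\in[0,1]}\Big|\frac{U^{(\le c)}_{\lfloor nt\rfloor}}{b_n}\Big|^p\le C c^{p-\alpha},
\]
valid for all $n$; sending $n\to\infty$ then $c\to 0$ closes the tightness argument. In my view the main obstacle is the combination of the Karamata identification of $\lim\esp S_n$ with the variance estimate, since one must ensure both that contributions from $k$ with $p_k$ off the critical scale $1/n$ are negligible and that the joint behaviour across the $m$ times is faithfully captured by the single scalar function $F$; the $J_1$-tightness for $\alpha<1$, though technical, then follows a familiar truncation recipe for heavy-tailed partial sums.
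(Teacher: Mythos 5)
Your fdd argument is, in substance, the paper's own: you condition on $\calY$, factor the conditional characteristic function over urns, use $\log\phi_\varepsilon(u)\sim-\sigma_\varepsilon^\alpha|u|^\alpha$, and reduce everything to a weak law of large numbers for $S_n=b_n^{-\alpha}\sum_k|V_{n,k}|^\alpha=\sum_{\vvdelta\in\Lambda_d}|\ip{\vv\theta,\vvdelta}|^\alpha M^\vvdelta_{\floor{n\vvt}}/d_n$, which is exactly the first part of Theorem~\ref{thm:M}. Your Poissonized computation of $\esp S_n$ (Karamata against the counting measure of the $p_k$) and the variance bound via independence across urns reproduce Lemma~\ref{lem:M}. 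The tightness scheme for $\alpha<1$ (truncation at level $\kappa b_n$, then an $L^p$ bound on the small part using $\esp K_n=O(b_n^\alpha)$ and Karamata) also matches the paper; the only real divergence there is that for the large part you invoke point-process convergence to get full $J_1$-convergence of the truncated process, whereas the paper proves only its tightness, by showing that with high probability no two balls landing in urns with $|\varepsilon_k|>\kappa b_n$ arrive within $n\delta$ of one another --- a lighter argument, since fdd convergence is already in hand, and one that avoids having to justify the joint convergence of the extremal $\varepsilon_k$'s with the urn dynamics.

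The one genuine gap is the de-Poissonization, which you declare routine ``based on the law of large numbers for the total number of balls.'' Each inserted or deleted ball changes the occupancy pattern of at most one urn, so the natural coupling bound is $|M^\vvdelta_{\floor{n\vvt}}-\wt M^\vvdelta_{n\vvt}|\le\sum_{j=1}^{d}|N(nt_j)-\floor{nt_j}|$, which is of order $\sqrt n$ in probability. But the normalization is $d_n=n^\beta L(n)$, and $\sqrt n/d_n\to\infty$ whenever $\beta<1/2$, so this error is not negligible on the relevant scale and the law of large numbers $N(n)/n\to1$ alone does not close the argument. This is precisely why the paper does not stop at the $L^2$ law of large numbers for the Poissonized process: it proves a functional CLT for $\wt M^\vvdelta_{n\vvt}$ at the much finer scale $\sqrt{d_n}$ (Proposition~\ref{prop:MM}, whose tightness step requires Rosenthal's inequality and a chaining lemma), and then transfers the result to $M^\vvdelta_{\floor{n\vvt}}=\wt M^\vvdelta_{n(\vv\tau_{\floor{n\vvt}}/n)}$, with $\vv\tau$ the Poisson arrival times, via a change-of-time lemma. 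Alternatively one could estimate $\esp M^\vvdelta$ and $\Var M^\vvdelta$ directly in the multinomial scheme, but then the urns are no longer independent and the cross-covariances must be controlled. Either way, this step carries real content that your sketch omits; the rest of your plan is sound.
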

\begin{remark}We assume the law of $\varepsilon_1$ has
    a power tail and is symmetric. This is assumed both for simplicity
    and because,  in the symmetric case, 
  $\{U_n\}_{n\in\N}$ can be represented as the partial-sum process of
  a stationary sequence of random variables, the marginal law of which
  is $\varepsilon_1$. This follows the original motivation when
  randomizing the Karlin model in \citep{durieu16infinite}.  These 
  assumptions may be relaxed. The assumption of power tails may be
  replaced by the assumption of regularly varying tails with exponent
  $-\alpha$. In this case the theorem remains clearly true after slight
  modifications: in particular  the normalization may acquire another
  slowly varying function. Another relaxation would involve dropping the
  assumption of symmetry, while still keeping  
  $\varepsilon_1$  in the domain of attraction of an $\alpha$-stable
  law. It is very likely that a version of the theorem will hold in
  such a general case; it will likely involve both a modified
  normalization and a centering. Furthermore, the limit may then be a
  nonsymmetric version of the  process $\U^{\alpha,\beta}$. In order
  to gain the maximum insight in the limiting procedure and the
  limiting process while not getting bogged down in technical details,
  we have chosen to work under the present, relatively
  straightforward, conditions. 
\end{remark}
We will prove this theorem by first conditioning on the urn sampling  
sequence 
$(Y_n)_{n\ge 1}$. It turns out that the characteristic function of
finite-dimensional distributions of $U$ can be expressed in terms of
certain statistics of that sequence. 
The same idea can be used in 
the case of a bounded $\varepsilon_1$  (although the proof in
\citep{durieu16infinite} was different and actually more involved as
the results are stronger; see also \citep{durieu19random} for the same
idea applied to a generalization of Karlin model). Indeed, in this case, the
limit process is Gaussian, so one addresses the convergence of the 
covariance function by essentially examining the joint
even/odd-occupancies {\em at two different time points}. In the
present case, the limit is a stable process and, hence, no longer
characterized by bivariate distributions. Therefore,  as an
intermediate step, we have to examine the joint even/odd-occupancies
{\em at multiple time points} $(n_1,\dots,n_d)$. For this purpose, we
investigate the following multiparameter process 
\[
M_{n_1,\dots,n_d}^{\delta_1,\dots,\delta_d} = \sum_{k\ge 1}\inddd{Y_{n_1,k} = \delta_1\mmod 2, \dots, Y_{n_d,k} = \delta_d \mmod 2}, 
\]
with $n_1,\dots,n_d\in\N_0=\{0\}\cup\N$ and
$(\delta_1,\dots,\delta_d)\in\{0,1\}^d\setminus\{(0,\dots,0)\}$. We
refer to this process as the {\em multiparameter even/odd-occupancy
  process}. A weak law of large numbers for the process $M$ will turn
out to be sufficient 
to prove the first part of Theorem \ref{thm:1}. However, we will
establish a functional central limit theorem for the multiparameter
even/odd-occupancy   process 
(Theorem \ref{thm:M} below). The limit in that result can be viewed as a
multiparameter generalization of the bi-fractional Brownian motion
\citep{houdre03example} and, hence,  is of   interest on its own. 
There is a huge literature on limit theorems for various counting statistics of the Karlin model and its extensions (see e.g.~\citep{gnedin10limit,gnedin07notes,alsmeyer17functional} and references therein). However, the investigation of multiparameter even/odd-occupancy process above seems to be new. 

The paper is organized as follows.  Section \ref{sec:sssi} reviews the background on flow representation of stationary stable processes and shows that the increment process of $\U^{\alpha,\beta}$ is driven by a positive flow. Section \ref{sec:M} establishes limit theorems for the multiparameter even/odd-occupancy process $M$. Section \ref{sec:proof} presents the proof of main result.

\section{A new class of self-similar stable processes with stationary increments}\label{sec:sssi}
We start by verifying the self-similarity of the process $\U^{\alpha,\beta}$ introduced in \eqref{eq:Uab}.  It will also follow from Theorem \ref{thm:1} and the Lamperti theorem (see e.g.~\citep{samorodnitsky16stochastic}), but a direct argument is simple. 
\begin{proposition}The process $\U^{\alpha,\beta}$ is $(\beta/\alpha)$-self-similar. That is,
\[
\pp{\U^{\alpha,\beta}_{\lambda t}}_{t\ge0} \eqfdd \lambda^{\beta/\alpha}\pp{\U^{\alpha,\beta}_t}_{t\ge 0} \quad\mfa \lambda>0.
\]
\end{proposition}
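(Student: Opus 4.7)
The plan is to verify self-similarity directly at the level of the characteristic function of the finite-dimensional distributions, exploiting the explicit scaling of the control measure $\beta r^{-\beta-1}dr\,\P'(d\omega')$ under dilations of the $r$-variable.

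First, I would recall the fundamental formula for S$\alpha$S stochastic integrals: for any $d\ge 1$, any $t_1,\dots,t_d\ge 0$, and any $\theta_1,\dots,\theta_d\in\R$,
\[
\esp\exp\!\left(i\sum_{j=1}^d \theta_j\,\U^{\alpha,\beta}_{t_j}\right)
=\exp\!\left(-\int_{\R_+\times\Omega'}\left|\sum_{j=1}^d \theta_j\,\ind_{\{N'(t_jr)(\omega')\ \rm odd\}}\right|^{\alpha}\beta r^{-\beta-1}\,dr\,\P'(d\omega')\right),
\]
which will serve as the reference expression \eqref{eq:stable_fdd}. Applied to $(\U^{\alpha,\beta}_{\lambda t_1},\dots,\U^{\alpha,\beta}_{\lambda t_d})$, the integrand involves $\ind_{\{N'(\lambda t_j r)\ \rm odd\}}$.

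Next, in the integral over $r$, I would perform the change of variables $r=r'/\lambda$, so that $dr=dr'/\lambda$ and
\[
\beta r^{-\beta-1}\,dr = \lambda^{\beta}\,\beta\,(r')^{-\beta-1}\,dr'.
\]
Crucially, the indicator transforms trivially: $\ind_{\{N'(\lambda t_j\cdot r'/\lambda)\ \rm odd\}}=\ind_{\{N'(t_j r')\ \rm odd\}}$, so the integrand keeps exactly the same functional form, only with $r$ replaced by $r'$. Collecting the extra factor $\lambda^\beta$ produced by the measure, the exponent of the characteristic function becomes
\[
-\lambda^{\beta}\int_{\R_+\times\Omega'}\left|\sum_{j=1}^d \theta_j\,\ind_{\{N'(t_j r')\ \rm odd\}}\right|^{\alpha}\beta (r')^{-\beta-1}\,dr'\,\P'(d\omega').
\]

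Finally, I would absorb the $\lambda^\beta$ back into the $\theta_j$'s by pulling $\lambda^{\beta/\alpha}$ inside the absolute value (using $\alpha$-homogeneity of $|\cdot|^\alpha$). The result is precisely the characteristic-function exponent of $(\lambda^{\beta/\alpha}\U^{\alpha,\beta}_{t_1},\dots,\lambda^{\beta/\alpha}\U^{\alpha,\beta}_{t_d})$, establishing the claimed equality in finite-dimensional distributions. There is no real obstacle here; the only point requiring a small amount of care is the bookkeeping of the $\lambda$-factor coming from the control measure versus the $\alpha$-homogeneity that converts it into the self-similarity index $\beta/\alpha$ — which is also where the specific index $\beta/\alpha$ is seen to arise naturally from the interplay between the tail exponent $\alpha$ of $\calM_{\alpha,\beta}$ and the scaling $\beta$ of the control measure.
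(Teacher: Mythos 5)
Your argument is correct and is essentially identical to the paper's own proof: both write the characteristic function of the finite-dimensional distributions via the S$\alpha$S integral formula, use the substitution $r\mapsto r/\lambda$ (under which the control measure $\beta r^{-\beta-1}dr$ scales by $\lambda^\beta$ while the indicator is unchanged), and then absorb $\lambda^\beta$ into the coefficients by $\alpha$-homogeneity to produce the factor $\lambda^{\beta/\alpha}$. No gaps; the bookkeeping you flag is exactly the content of the paper's one-line computation.
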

\begin{proof}
Fix $\lambda>0$. For any $d\in\N, t_1,\dots,t_d\ge0,
a_1,\dots,a_d\in\R$,
 \begin{align}
\esp\exp\pp{i\summ k1d a_k\U^{\alpha,\beta}_{\lambda t_k}} & = \exp\pp{-\int_0^\infty\int_{\Omega'}\abs{\summ k1d a_k\inddd{N'(\lambda t_kr)(\omega')\ \rm odd}}^\alpha \proba'(d\omega')\beta r^{-\beta-1}dr}\label{eq:stable_fdd}
\\\nonumber
& = \exp\pp{-\int_0^\infty\int_{\Omega'}\lambda^\beta\abs{\summ k1d a_k\inddd{N'(t_kr)(\omega')\ \rm odd}}^\alpha \proba'(d\omega')\beta r^{-\beta-1}dr}\\
& =  \esp\exp\pp{-i\summ k1d
  a_k\lambda^{\beta/\alpha}\U^{\alpha,\beta}_{t_k}}, \nonumber
\end{align}
as required. 
\end{proof}

We now consider the increment process of $\U^{\alpha,\beta}$. We will
see that the increment process is stationary (which, of course, will
follow from  Theorem \ref{thm:1} as well). More importantly, we will
classify the flow structure of this process in the spirit of
\citet{rosinski95structure} and \citet{samorodnitsky05null}. 
We start with a background on the flow structure of stationary
S$\alpha$S processes. It was shown by 
\citet{rosinski95structure} that, given a stationary S$\alpha$S process
$X = (X_t)_{t\in\R}$ there exist a measurable space $(S,\calS,\mu)$,  a non-singular
flow 
$(T_t)_{t\in\R}$ on it, and a function $f\in L^\alpha(S,\mu)$,
such that  
\equh\label{eq:spectral}
(X_t)_{t\in\R}\stackrel{f.d.d.}=\pp{\int_S c_t(s)f\circ T_t(s)\pp{\frac{d\mu\circ T_t}{d\mu}(s)}^{1/\alpha}\calM_\alpha(ds)}_{t\in\R},
\eque
where $\calM_\alpha$ is an S$\alpha$S random measure on $(S,\calS)$
with control measure $\mu$, and 
$(c_t)_{t\in\R}$
 is a $\pm 1$-valued 
cocycle with respect to 
$(T_t)_{t\in\R}$. Recall that a non-singular flow
$(T_t)_{t\in \R}$ is a group of measurable maps from $S$ onto $S$ such
that  for all $t\in\R$ the measure $\mu\circ T$ is equivalent to
$\mu$. 
A cocycle 
$(c_t)_{t\in\R}$ is a family of measurable functions on
$(S,\calS)$ such that  for all $t_1,t_2\in\R$, $c_{t_1+t_2}(s) =
c_{t_1}(s)c_{t_2}\circ T_{t_1}(s)$ $\mu$-almost everywhere. See  
\citet{krengel85ergodic} and \citet{aaronson97introduction} for more
information. In particular,  $T$ 
induces a unique decomposition of $(S,\calS)$ modulo $\mu$, 
\[
S = P \cup CN \cup D,
\]
where $P, CN, D$ are disjoint $T$-invariant measurable subsets of $S$ and,
restricted to each subset (if non-empty), $T$ is positive,
conservative null and dissipative, respectively. This decomposition
generates a unique in law decomposition of the process $X$ in
\eqref{eq:spectral} into a sum of 3 independent stationary S$\alpha$S 
processes, $X=X^P + X^{CN} +X^D$, where $X^P$ corresponds to a
positive flow, $X^{CN}$ corresponds to a conservative null flow, and 
$X^D$ corresponds to a dissipative flow, with one or two of the
components, possibly, vanishing.  In fact, one can define all 3
processes as in \eqref{eq:spectral}, but integrating over $P, CN, D$
correspondingly. 
  It is known that $X^P$ is non-ergodic, $X^D$ is mixing, while 
$ X^{CN}$ is ergodic, and can be either mixing or non-mixing; see
\citep{rosinski95structure,samorodnitsky05null,samorodnitsky16stochastic}. 
The processes generated by a dissipative flow have necessarily a mixed
moving-average representation.  The least understood family of
processes are those generated by a conservative null flow. 

We will show that the increment process of $\U^{\alpha,\beta}$ is
generated by a positive flow. In fact, a stationary S$\alpha$S process
$X$ is generated by positive flow if and only if it can be represented as 
in \eqref{eq:spectral}, where now the control measure $\mu$ is a
probability measure invariant under action of the operators
$(T_t)_{t\in\R}$ (so that the factor $(d\mu\circ T_t/d\mu)^{1/\alpha}$ disappears); see   \citep[Remark 2.6]{samorodnitsky05null}. 

For this purpose, we first present a natural extension of $\U^{\alpha,\beta}$ to a stochastic process indexed by $t\in\R$. 
We may and will assume that $\Omega'$ is the space of
Radon measures on $\R$ equipped with the Borel $\sigma$-field
corresponding to the topology of vague convergence, 
$\proba'$ 
is the law of
the unit rate Poisson point process on $\R$  and
\[
N'(t)(\omega'):=\begin{cases}
\omega'([0,t]) & t\geq 0\\
\omega'([t,0)) & t<0.
\end{cases}
\]
In this way, we now define
\[
\U^{\alpha,\beta}_t := \int_{\R_+\times\Omega'}\indd{N'(tr)(\omega')\ \rm odd}\calM_{\alpha,\beta}(dr,d\omega'),\ t\in\R. 
\]
This definition extends \eqref{eq:Uab}.
\begin{proposition}
The  increment process of $\U^{\alpha,\beta}$ defined as
\[
X_t := \U^{\alpha,\beta}(t+1) - \U^{\alpha,\beta}(t),\  
t\in\R,
\]
is stationary and driven by a positive flow.
\end{proposition}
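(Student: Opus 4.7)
My plan is to exhibit an explicit Rosi\'nski-type representation of $(X_t)_{t\in\R}$ in which the driving flow preserves a probability measure. By Remark~2.6 of \citep{samorodnitsky05null}, this will yield both the stationarity of $X$ and the positive-flow property in one stroke. The key observation is the choice of flow on $S:=\R_+\times\Omega'$ given by $T_t(r,\omega') = (r,\theta_{tr}\omega')$, $t\in\R$, where $\theta_u$ denotes the time shift on the Poisson configuration space $\Omega'$ so that $N'(v)(\theta_u\omega') = N'(u+v)(\omega')-N'(u)(\omega')$ for $u,v\ge0$. Since $\theta_{tr}$ preserves $\proba'$ for every fixed $r$ and $T_t$ fixes the $r$-coordinate, the control measure $\mu = \beta r^{-\beta-1}\,dr\,\proba'(d\omega')$ is $T_t$-invariant.

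To put the integrand of $X_t$ into canonical form I would use the elementary identity $\indd{a\text{ odd}}-\indd{b\text{ odd}} = (-1)^b\,\indd{a-b\text{ odd}}$ with $a = N'((t+1)r)(\omega')$ and $b = N'(tr)(\omega')$. Combined with $a-b = N'(r)(\theta_{tr}\omega')$, this yields
\[
X_t = \int_{\R_+\times\Omega'} c_t(r,\omega')\,f(T_t(r,\omega'))\,\calM_{\alpha,\beta}(dr,d\omega'),
\]
with $f(r,\omega') = \indd{N'(r)(\omega')\text{ odd}}$ and $c_t(r,\omega') = (-1)^{N'(tr)(\omega')}$. The cocycle identity $c_{s+t} = c_s\cdot(c_t\circ T_s)$ follows from $N'((s+t)r)(\omega') = N'(sr)(\omega')+N'(tr)(\theta_{sr}\omega')$, and the integrability $f\in L^\alpha(\mu)$ from $\int|f|^\alpha d\mu = \int_0^\infty \tfrac12(1-e^{-2r})\beta r^{-\beta-1}\,dr<\infty$ for $\beta\in(0,1)$. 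Stationarity of $X$ then follows in the standard way: after changing variable by $T_{-s}$ and invoking $\mu$-invariance, the cocycle relation produces an extra $\{\pm1\}$-valued factor in the integrand which, by symmetry of $\calM_{\alpha,\beta}$, leaves the finite-dimensional characteristic function \eqref{eq:stable_fdd} unchanged.

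For the positive-flow property, I would transfer the representation to an equivalent probability control measure. Fix any positive density $g$ on $\R_+$ with $\int_0^\infty g(r)\beta r^{-\beta-1}\,dr = 1$, and set $\mu^* := g\mu$. Since $T_t$ acts trivially on $r$, $\mu^*$ is $T_t$-invariant and equivalent to $\mu$. Letting $\calM^*$ be the S$\alpha$S random measure with control measure $\mu^*$, one has $\calM_{\alpha,\beta} \eqd g^{-1/\alpha}\calM^*$, hence
\[
(X_t)_{t\in\R} \eqfdd \pp{\int_S c_t(s)\wt f(T_t(s))\,\calM^*(ds)}_{t\in\R},\qquad \wt f := g^{-1/\alpha}f,
\]
with $\wt f\in L^\alpha(\mu^*)$ since $\int|\wt f|^\alpha d\mu^* = \int|f|^\alpha d\mu$. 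This is exactly the form of \eqref{eq:spectral} with $\mu^*$ a $T_t$-invariant probability measure and no Radon--Nikodym factor, so Remark~2.6 of \citep{samorodnitsky05null} identifies $X$ as driven by a positive flow. The only nontrivial step in the plan is the algebraic identification of the cocycle $c_t$ that reduces the difference of odd-parity indicators to a product of the form $c_t\cdot f\circ T_t$; everything else is a routine verification within the Rosi\'nski--Samorodnitsky framework.
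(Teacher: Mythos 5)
Your proposal is correct and follows essentially the same route as the paper: the same flow $T_t(r,\omega')=(r,\theta_{tr}\omega')$, the same cocycle $c_t=(-1)^{N'(tr)}$ arising from the parity identity for the difference of indicators, the same $L^\alpha$ computation $\int_0^\infty\tfrac12(1-e^{-2r})\beta r^{-\beta-1}dr<\infty$, and the same reduction to an invariant probability control measure (the paper absorbs the Radon--Nikodym factor $h^{1/\alpha}=(dm_\beta/d\nu)^{1/\alpha}$ into $f$, which is exactly your $g^{-1/\alpha}$ manoeuvre). No gaps worth noting.
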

\begin{proof}
Let $m_\beta$ denote the measure $\beta r^{-\beta-1} dr$ on $\R_+$.
It follows from the stochastic integral representation of $\U^{\alpha,\beta}$ that,
\equh
\label{eq:Xfdd}
(X_t)_{t\in\R}   \eqfdd  \pp{\int_{\R_+\times\Omega'}\bigl( \indd{N'((t+1)r)(\omega')\ \rm odd} - \indd{N'(tr)(\omega')\ \rm odd}\bigr)\calM_{\alpha,\beta}(dr, d\omega')}_{t\in\R},
\eque
where $\calM_{\alpha,\beta}$  is the S$\alpha$S random measure
described above.

Let $\theta_t$ be the
standard left shift on the space of Radon measures on $\R$, $t\in\R$,
and define a group of measurable operators on $\R_+\times\Omega'$ by 
\[
T_t(r,\omega') := (r,\theta_{tr}(\omega')), \ t\in\R\,.
\]
If $\nu$ is a probability measure on $\R_+$ equivalent to $m_\beta$
then the probability measure $\mu=\nu\times \proba'$ on $\R_+\times\Omega'$ 
is preserved by the operators $(T_t)_{t\in\R}$. 
Denote $h=dm_\beta/d\nu$ and 
define 
$$
f(r,\omega')= h(r)^{1/\alpha}\indd{N'(r)(\omega')\ \rm odd}, 
$$
and
\[
c_t(r,\omega')=
\indd{N'(tr)(\omega')\ \rm even} - \indd{N'(tr)(\omega')\ \rm odd}, \
(r,\omega')\in \R_+\times\Omega'.
\]
 If $\calM_\alpha$ is 
an S$\alpha$S
random measure on $\R_+\times\Omega'$ with control measure $\mu$,
then, in law, \eqref{eq:Xfdd} is the same as 
$$
(X_t)_{t\in\R}   \eqfdd  \pp{\int_{\R_+\times\Omega'} c_t(r,\omega')
  f\circ T_t(r,\omega') \calM_{\alpha}(dr, d\omega')}_{t\in\R}.
$$
Since 
$(c_t)_{t\in\R}$ is, clearly, a $\pm 1$-valued cocycle, this will
establish 
both stationarity of the increment process and the fact that it is
driven by a positive flow once we check that the function $f\in
L^\alpha(\mu)$. However,  
\eqnh
\int|f|^\alpha d\mu & = & \int\indd{N'(r)(\omega')\ \rm odd}m_\beta(dr)\proba'(d\omega')\\
& = & \int_0^\infty\beta r^{-\beta-1}\proba'\pp{N'(r)\ {\rm odd}}dr \\
& = & \int_0^\infty \beta r^{-\beta-1}\frac12\pp{1-e^{-2r}}dr = \Gamma(1-\beta)2^{\beta-1}<\infty.
\eqne
\end{proof}

\section{The multiparameter even/odd-occupancy process}
\label{sec:M}
Throughout,   for $d\in\N$, $\vvt = (t_1,\dots,t_d)\in[0,1]^d$, we
write 
\[
\floor{n\vvt} = (\floor{nt_1},\dots,\floor{nt_d}) = (n_1,\dots,n_d) 
\]
and denote 
\[
\Lambda_d = \{0,1\}^d\setminus\{(0,\dots,0)\}.
\]
Let $\vvdelta\in \Lambda_d$ and consider the multiparameter even/odd-occupancy process
\[
M_{\floor{n\vvt}}^\vvdelta := \sif k1 \prodd j1d\inddd{Y_{n_j,k} =
  \delta_j\mmod 2},\quad n\in\N, \, \vvt\in[0,1]^d. 
\]
Let $\M^\vvdelta = (\M^\vvdelta_\vvt)_{\vvt\in[0,1]^d}$ be a centered
Gaussian random field with covariance function 
\[
\Cov(\M_\vvt^\vvdelta,\M_\vvs^\vvdelta) 
= \int_0^\infty \Cov\pp{\inddd{\vec N(r\vvt)=\vvdelta\mmod2},\inddd{\vec N(r\vvs)=\vvdelta\mmod2}}\beta r^{-\beta-1}dr,
\]
where $N$ is a standard Poisson process on $\R_+$ and 
\equh\label{eq:vvN}
\ccbb{\vec N(n\vvt) = \vvdelta \mmod 2} \equiv \ccbb{N(nt_j) =
  \delta_j\mmod 2 \mfa j=1,\dots,d}\,. 
\eque
The next result is  a limit theorem for
$M_{\floor{n\vvt}}^\vvdelta$. It uses the normalization 
 $d_n  = b_n^\alpha = n^\beta L(n)$, where $b_n$ is as in Theorem
 \ref{thm:1}. We use the new notation to emphasize the fact that the
 normalization in Theorem \ref{thm:M} does not depend on $\alpha$.
\begin{theorem}\label{thm:M}
Under the assumption~\eqref{eq:RVp},
for all $d\in\N, \, \vvt\in[0,1]^d, \, \vvdelta\in\Lambda_d$, 
\[
\lim_{n\to\infty}\frac{M_{\floor{n\vvt}}^\vvdelta}{d_n} =m_\vvt^\vvdelta
\]
in probability, where 
\[
m_\vvt^\vvdelta := \limn \frac{\esp M_{\floor {n\vvt}}^\vvdelta}{d_n} = \int_0^\infty\proba\pp{\vec N(r\vvt)=\vvdelta\mmod 2}\beta r^{-\beta-1}dr\,.
\]
Moreover, 
\[
\pp{\frac{M_{\floor{n\vvt}}^\vvdelta - \esp M_{\floor{n\vvt}}^\vvdelta}{\sqrt{d_n}}}_{\vvt\in[0,1]^d}\weakto \pp{\M_\vvt^\vvdelta}_{\vvt\in[0,1]^d}
\]
in $D([0,1]^d)$ with respect to the 
Skorohod topology
and the limiting random field has a
version with continuous sample paths. 
\end{theorem}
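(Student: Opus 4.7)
The plan is to Poissonize. Let $N$ be an independent rate-one Poisson process on $\R_+$ with jump times $(T_i)_{i\ge 1}$, and set $\wt Y_{t,k}:=\#\{i:T_i\le t,\,Y_i=k\}$. Then $(\wt Y_{\cdot,k})_{k\ge 1}$ is a family of \emph{independent} Poisson processes with rates $(p_k)$, and the Poissonized even/odd-occupancy process
\[
\wt M^\vvdelta_\vvt:=\sum_{k\ge 1}\prod_{j=1}^d\ind\bigl\{\wt Y_{nt_j,k}=\delta_j\mmod 2\bigr\}=\sum_{k\ge 1}\xi_k(\vvt)
\]
is a sum of independent $\{0,1\}$-valued summands. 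The coupling $\wt M^\vvdelta_{n\vvt}=M^\vvdelta_{N(nt_1),\dots,N(nt_d)}$ will be used for de-Poissonization at the end, and all the computations are first carried out for $\wt M$.

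By scaling of the Poisson process, $\esp\xi_k(\vvt)=g_\vvdelta(np_k\vvt)$ where $g_\vvdelta(\vv x):=\proba\bigl(\vec N(\vv x)=\vvdelta\mmod 2\bigr)$ is bounded and vanishes at the origin since $\vvdelta\ne\vv 0$. Karlin's classical summation formula for regularly varying frequencies satisfying \eqref{eq:RVp} then yields
\[
\esp\wt M^\vvdelta_{n\vvt}=\sum_k g_\vvdelta(np_k\vvt)\sim d_n\int_0^\infty g_\vvdelta(r\vvt)\,\beta r^{-\beta-1}\,dr=d_n\,m^\vvdelta_\vvt,
\]
and the same procedure gives $\Var(\wt M^\vvdelta_{n\vvt})=\sum_k g_\vvdelta(np_k\vvt)\bigl(1-g_\vvdelta(np_k\vvt)\bigr)=O(d_n)$ and $\Cov(\wt M^\vvdelta_{n\vvs},\wt M^\vvdelta_{n\vvt})\sim d_n\Cov(\M^\vvdelta_\vvs,\M^\vvdelta_\vvt)$. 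Chebyshev's inequality delivers the weak LLN for the Poissonized process, while the Lindeberg--Feller CLT for the centered and scaled triangular array $(\xi_k(\vvt)-\esp\xi_k(\vvt))/\sqrt{d_n}$ of independent summands bounded by $1/\sqrt{d_n}$ (so that Lindeberg is automatic) yields the finite-dimensional convergence.

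Tightness in $D([0,1]^d)$ is obtained from a Bickel--Wichura type moment estimate on rectangular increments: for a block $B\subset[0,1]^d$ one bounds $\esp|Z_n(B)|^{4}$ by expanding and using independence across urns, reducing it to $\sum_k h_B(np_k)$ for a function $h_B$ whose order of vanishing at the origin is dictated by the volume of $B$, so that Karlin's formula supplies the required power of $|B|$. A continuous version of $\M^\vvdelta$ then follows from Kolmogorov's criterion applied to the H\"older bound $\esp(\M^\vvdelta_\vvt-\M^\vvdelta_\vvs)^2\lesssim\sum_{j=1}^d|t_j-s_j|^\beta$, itself reducible via the spectral representation to the one-dimensional computation $\int_0^\infty(1-e^{-2r|t-s|})\beta r^{-\beta-1}\,dr=\Gamma(1-\beta)2^{\beta-1}|t-s|^\beta$ already performed in the previous section. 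The main obstacle is de-Poissonization: the naive bound $|\wt M^\vvdelta_{n\vvt}-M^\vvdelta_{\floor{n\vvt}}|\le\sum_j|N(nt_j)-\floor{nt_j}|=O_\proba(\sqrt n)$ is too crude when $\beta<1/2$, so one has to show that the mismatch behaves like a Karlin-type sum on the much smaller sample of size $|N(n)-n|=O_\proba(\sqrt n)$, hence is $o_\proba(\sqrt{d_n})$, and to make this control uniform in $\vvt$ for the functional statement.
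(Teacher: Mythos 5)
Your overall architecture (Poissonization, Chebyshev for the LLN, Lindeberg--Feller for the finite-dimensional distributions, then de-Poissonization) matches the paper's, and those parts are fine. But both of the genuinely hard steps have gaps. First, tightness: the Bickel--Wichura fourth-moment bound does not close. Expanding $\esp|Z_n(B)|^4$ over independent urns gives, after the $d_n^{-2}$ normalization, a term of the form $\bigl(\sum_k \proba(\xi_k(B)\neq 0)\bigr)^2/d_n^2\lesssim |B|^{2\gamma}$ \emph{plus} a diagonal term $\sum_k \proba(\xi_k(B)\neq 0)/d_n^2\lesssim |B|^{\gamma}/d_n$ with $\gamma<\beta<1$. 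The diagonal term is small in $n$ but never acquires a power of $|B|$ exceeding one, so no moment criterion of Kolmogorov--Chentsov/Bickel--Wichura type applies uniformly in $n$ (taking higher moments does not help: the linear-in-$k$ term persists with exponent $\gamma<1$). This is exactly why the paper pairs the Rosenthal-type moment bound \eqref{eq:lem2}, which has the same defective term $|t_1-t_1'|^{\gamma}\nu(n)$, with the almost-sure increment bound \eqref{eq:lem3} and invokes the specialized chaining lemma \citep[Lemma 3.7]{durieu16infinite}. You need some substitute for that combination; a pure moment bound will not do.

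Second, de-Poissonization: your quantitative claim is false as stated. A Karlin-type occupancy count on a sample of size $m\asymp\sqrt n$ is of order $m^\beta L(m)\asymp n^{\beta/2}L(\sqrt n)$, which is of the \emph{same} order as $\sqrt{d_n}=n^{\beta/2}L(n)^{1/2}$ (for $L\equiv 1$ the ratio is $1$), not $o_\proba(\sqrt{d_n})$. So bounding the raw mismatch $|\wt M^\vvdelta_{n\vvt}-M^\vvdelta_{\floor{n\vvt}}|$ by the number of urns hit by the $O_\proba(\sqrt n)$ discrepant balls does not suffice. What is true is that the \emph{centered} mismatch has variance $\lesssim \sum_k\proba(B_k\Delta B_k')\asymp n^{\beta/2}$, hence standard deviation $O(n^{\beta/4})=o(\sqrt{d_n})$, but then the difference of the two centerings must also be controlled, and uniformity in $\vvt$ recreates the tightness problem above. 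The paper sidesteps all of this with the multivariate change-of-time lemma: writing $M^\vvdelta_{\floor{n\vvt}}=\wt M^\vvdelta_{\vv\tau_{\floor{n\vvt}}}$ via the Poisson arrival times, using $\vv\tau_{\floor{n\vvt}}/n\to\vv\id$ uniformly a.s.\ and the continuity of the limit field. Either adopt that route or redo the mismatch estimate at the level of centered variances with a uniform modulus; as written, your de-Poissonization step does not go through. (Your Kolmogorov-criterion argument for continuity of $\M^\vvdelta$ is fine, since Gaussianity upgrades the second-moment H\"older bound to arbitrarily high moments.)
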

\begin{remark} For the weak convergence in Theorem \ref{thm:M} (and in Proposition \ref{prop:MM} below), we shall prove that it holds with respect to any topology generated by a complete separable metric on $D([0,1]^d)$  weaker than the uniform metric. For general background on weak convergence in $D([0,1]^d)$, see for example \citep{neuhaus71weak,straf72weak}.
\end{remark}
Only the first part of Theorem \ref{thm:M} is needed for 
Theorem \ref{thm:1}. However, the Gaussian random field
$\M_\vvt^\vvdelta$ is of interest on its own. In fact, it was shown in
\citep[Theorem~2.3]{durieu16infinite} that, 
when $d=1$, the process $M_n^1$ (which is simply the odd-occupancy
process) satisfies the weak convergence in Theorem \ref{thm:M} with the
limiting process  $\mathbb M^1$ being, up to a
multiplicative constant, the bi-fractional Brownian
motion \citep{lei09decomposition,houdre03example} with parameters $H =
1/2, K = \beta$. This is a 
centered Gaussian process with covariance function 
\[
\Cov(\M^1 _t,\M^1_s) = \Gamma(1-\beta)2^{\beta-2}\pp{(s+t)^\beta-
  |s-t|^\beta},\quad s,t\geq 0.
\]
Therefore, the limit obtained in Theorem \ref{thm:M} can be viewed as a random field generalization of the bi-fractional
 Brownian motion.  

In order to analyze the multiparameter even/odd-occupancy process
$M_{\floor{n\vvt}}^\vvdelta$, we introduce a Poissonization of the
underlying urn sampling sequence $(Y_n)_{n\in\N}$. Let $N$ be 
a standard Poisson process independent of $(Y_n)_{n\in\N}$ and
$(\varepsilon_n)_{n\in\N}$. Set  
\[
N_k(t):= \summ i1{N(t)}\indd{Y_i=k},\quad k\in\N, \, t\ge 0.
\]
Clearly $(N_k)_{k\in\N}$ are independent Poisson processes with
respective parameters  $(p_k)_{k\in\N}$. We use the notation
$\{\vec N_k(\vvt) = \vvdelta \mmod 2\}$ whose meaning is analogous to 
\eqref{eq:vvN}, and the Poissonized version of the multiparameter
even/odd-occupancy process is 
\[
\wt M_\vvt^\vvdelta := \sum_{k=1}^\infty\inddd{\vec
  N_k(\vvt)=\vvdelta\mmod2},\quad \vvt\in\R_+^d.
\]

\begin{lemma}\label{lem:M}
Under the assumption~\eqref{eq:RVp}, for all $d\in\N, \,
\vvt\in[0,1]^d, \, \vvdelta\in\Lambda_d$,  
\[
\lim_{n\to\infty}\frac{\wt M_{n\vvt}^\vvdelta}{d_n} =
m_\vvt^\vvdelta\quad\text{ in }\, L^2.
\]
\end{lemma}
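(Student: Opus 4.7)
The plan is to establish $L^2$ convergence of $\wt M_{n\vvt}^\vvdelta/d_n$ to $m_\vvt^\vvdelta$ by controlling separately the mean and the variance. Since $(N_k)_{k\ge 1}$ are independent Poisson processes, $\wt M_{n\vvt}^\vvdelta$ is a sum of independent Bernoulli random variables, so
\[
\Var\pp{\wt M_{n\vvt}^\vvdelta}\le \esp \wt M_{n\vvt}^\vvdelta.
\]
Because $d_n=n^\beta L(n)\to\infty$, it is therefore enough to prove the one-sided convergence $\esp \wt M_{n\vvt}^\vvdelta/d_n \to m_\vvt^\vvdelta$, since then $\Var(\wt M_{n\vvt}^\vvdelta)/d_n^2\le \esp\wt M_{n\vvt}^\vvdelta/d_n^2\to 0$ automatically, and the $L^2$ convergence follows from the bias-variance decomposition.

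To analyze the mean, exploit that $\{N_k(t)\}_{t\ge 0}\eqd \{N(p_k t)\}_{t\ge 0}$ for $N$ a standard Poisson process, so with $\phi(r):=\proba(\vec N(r\vvt)=\vvdelta\mmod 2)$ one has
\[
\esp \wt M_{n\vvt}^\vvdelta = \sif k1 \phi(np_k) = \int_0^\infty \phi(r)\,\tilde\nu_n(dr),\qquad \tilde\nu_n := \sif k1\delta_{np_k}.
\]
Since $\vvdelta\ne(0,\ldots,0)$, choose $j_0$ with $\delta_{j_0}=1$ (if the corresponding $t_{j_0}$ equals $0$ both sides vanish trivially). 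This gives the bounds $\phi(r)\le \proba(N(rt_{j_0})\text{ odd})\le rt_{j_0}$ and $\phi(r)\le 1$. Assumption \eqref{eq:RVp} yields, for each fixed $r>0$,
\[
\frac{\tilde\nu_n([r,\infty))}{d_n}=\frac{\#\{k:p_k\ge r/n\}}{d_n}=\frac{F(n/r)}{d_n}\longrightarrow r^{-\beta},
\]
so $\tilde\nu_n/d_n$ converges vaguely on $(0,\infty)$ to $m_\beta(dr):=\beta r^{-\beta-1}dr$. Combined with continuity of $\phi$, this gives, for every $0<\epsilon<M<\infty$,
\[
\frac{1}{d_n}\int_\epsilon^M \phi(r)\,\tilde\nu_n(dr)\longrightarrow \int_\epsilon^M\phi(r)\, m_\beta(dr).
\]

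It remains to control the two tails uniformly in $n$. For the tail at infinity, $\phi\le 1$ gives
\[
\frac{1}{d_n}\int_M^\infty \phi(r)\,\tilde\nu_n(dr)\le \frac{\tilde\nu_n([M,\infty))}{d_n}\longrightarrow M^{-\beta},
\]
which vanishes as $M\to\infty$. For the tail at the origin, Karamata's theorem gives $\sum_{k:p_k\le x}p_k\sim \frac{\beta}{1-\beta}x^{1-\beta}L(1/x)$ as $x\to 0^+$ (this is where the assumption $\beta<1$ is essential), so with $x=\epsilon/n$ and slow variation of $L$,
\[
\frac{1}{d_n}\int_0^\epsilon \phi(r)\,\tilde\nu_n(dr)\le \frac{t_{j_0} n}{d_n}\sum_{k:p_k\le \epsilon/n}p_k \longrightarrow \frac{t_{j_0}\beta}{1-\beta}\,\epsilon^{1-\beta},
\]
which vanishes as $\epsilon\to 0^+$. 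The same bounds applied to $m_\beta$ show that $\int_0^\infty \phi(r)\, m_\beta(dr)=m_\vvt^\vvdelta$ is finite. Combining the three pieces yields $\esp \wt M_{n\vvt}^\vvdelta/d_n\to m_\vvt^\vvdelta$, which completes the proof via the opening variance bound.

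The main technical point is the Karamata-type tail estimate at the origin, which is where the hypothesis $\beta<1$ enters essentially; the rest is a routine vague-convergence-plus-truncation argument, made possible by the fact that the bound $\phi(r)\le (rt_{j_0})\wedge 1$ is integrable against $m_\beta$ precisely for $0<\beta<1$.
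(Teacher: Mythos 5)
Your proof is correct, and it shares its overall skeleton with the paper's: both reduce the $L^2$ statement to convergence of the mean via the elementary bound $\Var(\wt M_{n\vvt}^\vvdelta)\le\esp \wt M_{n\vvt}^\vvdelta$ (valid because the summands are independent Bernoulli variables) together with $d_n\to\infty$. Where you genuinely diverge is in how the mean asymptotics $\esp\wt M_{n\vvt}^\vvdelta/d_n\to m_\vvt^\vvdelta$ are established. The paper writes the mean as $\int_0^\infty\varphi_\vvt(n/x)\,\nu(dx)$ with $\nu=\sum_k\delta_{1/p_k}$, integrates by parts, computes $\varphi_\vvt$ explicitly to obtain an exponentially decaying bound on $\varphi_\vvt'$, and then passes the limit $\nu(nx)/\nu(n)\to x^\beta$ inside the integral using Potter bounds and dominated convergence. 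You instead prove vague convergence of the rescaled point measures $\tilde\nu_n/d_n$ to $\beta r^{-\beta-1}dr$ on $(0,\infty)$ and handle the two tails by truncation: the total-mass bound at infinity, and at the origin a Karamata estimate for $\sum_{p_k\le x}p_k$, using only the crude bound $\varphi_\vvt(r)\le (rt_{j_0})\wedge 1$ (your parenthetical disposal of the degenerate case $t_{j_0}=0$ is fine). Your route avoids both the explicit formula for $\varphi_\vvt$ and the Potter-bound domination step, and is in that sense more robust (it needs only continuity of $\varphi_\vvt$ and the linear bound near $0$); the paper's route is shorter once the derivative bound is in hand and, more importantly, sets up a template that is reused verbatim for the covariance computation in Proposition \ref{prop:MM}, which a pure mean argument would not cover. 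Both arguments correctly locate the role of $0<\beta<1$ as the condition making $(rt_{j_0})\wedge 1$ integrable against $\beta r^{-\beta-1}dr$.
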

\begin{proof}
Consider a Radon measure $\nu$ on $\R_+$ defined by $\nu :=
\sif k1 \delta_{1/p_k}$. By the assumption~\eqref{eq:RVp}, we have
$\nu(x) := \nu([0,x]) = x^\beta L(x)$, for all $x>0$. 
Observe that
\begin{align*}
\esp \wt M_{n\vvt}^\vvdelta& = \sum_{k=1}^\infty\esp\inddd{\vec N_k(n\vvt)=\vvdelta\mmod 2} = \sif k1\proba\pp{\vec N(np_k\vvt)=\vvdelta\mmod2}\\
& =\int_0^\infty\proba\pp{\vec N(n\vvt/x) =\vvdelta\mmod 2}\nu(dx) = \int_0^\infty\varphi_\vvt\pp{\frac nx}\nu(dx)
\end{align*}
with 
$\varphi_\vvt(s):=\proba(\vec N(s\vvt) = \vvdelta\mmod 2)$.
Assume, without loss of generality, that $t_1<\cdots<t_d$ and $\delta_1 = 1$.
We have the following explicit expression for $\varphi_\vvt$,
\begin{align*}
 \varphi_\vvt(s)
 &=\P(N(st_1)\ {\rm odd})\prod_{k=2}^d\P(N(s(t_k-t_{k-1}))=|\delta_k-\delta_{k-1}|\mmod 2)\\
 &=\frac{1-e^{-2st_1}}{2^d}\prod_{k=2}^d \bb{1+(-1)^{|\delta_k-\delta_{k-1}|}e^{-2s(t_k-t_{k+1})}},
\end{align*}
from which we can easily deduce that for $\vvt$ fixed,
$\varphi_\vvt'(s)$ is bounded and there exist $T>0$ and $C>0$ such
that $|\varphi'_\vvt(s)| \le Ce^{-sT}$ for all $s>0$.  
Integrating by parts gives us 
\[
\int_0^\infty\varphi_\vvt\pp{\frac nx}\nu(dx) = 
\int_0^\infty \frac
1{x^2}\varphi_\vvt'\pp{\frac 1x}\nu(nx)dx. 
\]
A standard
argument using the Potter bounds (\citep[Corollary
10.5.8]{samorodnitsky16stochastic}) tells us that 
\begin{align*}
\lim_{n\to\infty}\frac{1}{\nu(n)}\int_0^\infty\varphi_\vvt\pp{\frac
  nx}\nu(dx) & 
= \int_0^\infty \frac1{x^2}\varphi_\vvt'\pp{\frac 1x}\lim_{n\to\infty}\frac{\nu(nx)}{\nu(n)}dx
\\
& =\int_0^\infty\varphi_\vvt'\pp{\frac 1x}x^{\beta-2} dx  = \int_0^\infty\varphi_\vvt\pp{\frac1x}\beta x^{\beta-1}dx 
\\ &
 = \int_0^\infty\proba\pp{\vec N\pp{\vvt r}=\vvdelta\mmod2}\beta r^{-\beta-1}dr.
\end{align*}
Since 
\[
\Var(\wt M_{n\vvt}^\vvdelta) = \sif k1 \Var\pp{\inddd{\vec N_k(n\vvt)=\vvdelta\mmod2}} \leq \sif k1 \proba\pp{\vec N_k(n\vvt)=\vvdelta\mmod2}
=\esp\wt M_{n\vvt}^\vvdelta, 
\]
the  $L^2$ 
convergence follows. 
\end{proof}

\begin{proposition}\label{prop:MM}
Under the assumption~\eqref{eq:RVp}, for all $d\in\N, \,
\vvdelta\in\Lambda_d$,
\[
\pp{\frac{\wt M_{n\vvt}^\vvdelta - \esp\wt M_{n\vvt}^\vvdelta}{\sqrt{d_n}}}_{\vvt\in[0,1]^d}\weakto \pp{\M_\vvt^\vvdelta}_{\vvt\in[0,1]^d}
\]
in $D([0,1]^d)$ with 
respect to the 
Skorohod topology, and the limiting random field has a
version with continuous sample paths. 
\end{proposition}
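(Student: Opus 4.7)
The plan is to split the proof into convergence of finite-dimensional distributions (fdd) and tightness in $D([0,1]^d)$, and then verify continuity of the Gaussian limit through Kolmogorov's criterion applied to its covariance. For the fdd step, the crucial observation is that, setting $\xi_{n,k}(\vvt) := \inddd{\vec N_k(n\vvt)=\vvdelta\mmod 2}$, the process $\wt M_{n\vvt}^\vvdelta=\sif k1 \xi_{n,k}(\vvt)$ is a sum of independent, $[0,1]$-valued summands indexed by $k$. By the Cram\'er--Wold device, fdd convergence reduces to asymptotic normality of $d_n^{-1/2}\sum_{j=1}^m a_j\bigl(\wt M_{n\vvt_j}^\vvdelta-\esp\wt M_{n\vvt_j}^\vvdelta\bigr)$ for any $\vvt_1,\dots,\vvt_m\in[0,1]^d$ and $a_1,\dots,a_m\in\R$. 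Since this is a centered sum of independent summands each bounded by $2\sum_j|a_j|$ and $d_n\to\infty$, Lindeberg's condition is automatic, so everything reduces to computing the covariance. Independence across $k$ gives
\[
\Cov\bigl(\wt M_{n\vvt}^\vvdelta,\wt M_{n\vvs}^\vvdelta\bigr)=\sif k1\psi_{\vvt,\vvs}(np_k),
\]
where $\psi_{\vvt,\vvs}(s):=\Cov\bigl(\inddd{\vec N(s\vvt)=\vvdelta\mmod 2},\inddd{\vec N(s\vvs)=\vvdelta\mmod 2}\bigr)$. A computation analogous to the one at the start of the proof of Lemma~\ref{lem:M} shows that $\psi_{\vvt,\vvs}$ is smooth, vanishes at $0$, is bounded, and has an exponentially decaying derivative. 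Rewriting the sum as an integral against $\nu=\sif k1\delta_{1/p_k}$ and reusing the Potter-bounds plus integration-by-parts argument from Lemma~\ref{lem:M} then yields
\[
\frac{1}{d_n}\Cov\bigl(\wt M_{n\vvt}^\vvdelta,\wt M_{n\vvs}^\vvdelta\bigr)\longrightarrow\int_0^\infty\psi_{\vvt,\vvs}(r)\beta r^{-\beta-1}dr=\Cov\bigl(\M_\vvt^\vvdelta,\M_\vvs^\vvdelta\bigr).
\]

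For tightness in $D([0,1]^d)$, I would invoke a Bickel--Wichura-type criterion adapted to the weaker topology mentioned in the Remark: it suffices to bound a moment of rectangular increments by an ``area'' function. For $\vvs\leq\vvt$ coordinatewise, let $\Delta_{\vvs,\vvt}\wt M^\vvdelta$ denote the mixed difference of the centered process on the rectangle $[\vvs,\vvt]$; it is again a sum over $k$ of independent, mean-zero, $[-1,1]$-valued summands, so a fourth-moment or Rosenthal inequality gives
\[
\esp\bigl|\Delta_{\vvs,\vvt}\wt M^\vvdelta\bigr|^4\leq C\,\Var\bigl(\Delta_{\vvs,\vvt}\wt M^\vvdelta\bigr)^2+C\,\Var\bigl(\Delta_{\vvs,\vvt}\wt M^\vvdelta\bigr).
\]
The main obstacle is to show $\Var\bigl(\Delta_{\vvs,\vvt}\wt M^\vvdelta\bigr)\leq C\,d_n\,\phi(\vvt-\vvs)$ for a H\"older-type function $\phi$ vanishing at the origin. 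This should follow from the same $\nu$-integral representation as in Lemma~\ref{lem:M}, now with a $\psi$-type kernel encoding the rectangular increment, upon observing that this kernel vanishes on $\vvs=\vvt$ and is smoothly controlled by the distance to it; combined with the fourth-moment bound, the resulting inequality feeds directly into the tightness criterion.

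Finally, continuity of $\M^\vvdelta$ follows from the same variance bound: since the limit is Gaussian, $\esp\bigl(\M_\vvt^\vvdelta-\M_\vvs^\vvdelta\bigr)^2\leq C|\vvt-\vvs|^\eta$ for some $\eta>0$ (obtained directly from the explicit covariance formula, or by passing to the limit $n\to\infty$ in the previous estimate), and then Kolmogorov's continuity criterion provides a continuous modification. Together with the fdd convergence and tightness, this completes the proof of Proposition~\ref{prop:MM}.
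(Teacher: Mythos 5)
Your finite-dimensional-distribution step is essentially the paper's own argument (independence across $k$, the $\nu$-integral representation of the covariance handled by the Potter-bound/integration-by-parts computation of Lemma~\ref{lem:M}, then Lindeberg--Feller and Cram\'er--Wold), and your Kolmogorov-criterion argument for continuity of the Gaussian limit is a valid alternative to the paper's appeal to the tightness estimate. The problem is the tightness step, and it sits exactly where you flag ``the main obstacle.''

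For sums of independent, centered, uniformly bounded summands with individually small variances, the fourth moment is genuinely of the order of the variance, not of its square: writing $\xi_k=\ind_{B_k}-\ind_{B_k'}-\P(B_k)+\P(B_k')$ as in the paper, one has $\esp\xi_k^4\asymp\esp\xi_k^2\asymp\P(B_k\Delta B_k')$ when the latter is small, and independence gives $\esp\abs{\sum_k\xi_k}^4\geq\sum_k\esp\xi_k^4\asymp\Var\pp{\sum_k\xi_k}$. So the additive term $C\,\Var$ in your Rosenthal bound cannot be removed (the same linear term appears in the paper's estimate \eqref{eq:lem2}), and after normalizing by $d_n^2$ your increment bound reads $\esp\abs{\Delta_{\vvs,\vvt}\wt M^\vvdelta/\sqrt{d_n}}^4\leq C\phi(\vvt-\vvs)^2+Cd_n^{-1}\phi(\vvt-\vvs)$ with $\phi(h)\asymp|h|^\gamma$, $\gamma<\beta$. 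The second term is not of the superlinear form $\mu(B)^{1+\epsilon}$ that a Bickel--Wichura-type criterion requires for any finite measure $\mu$ uniformly over small blocks: summing the putative bound over a partition of $[0,1]$ into $m$ blocks of equal size, the measure side stays bounded while $d_n^{-1}\sum\phi\asymp d_n^{-1}m^{1-\gamma}$ blows up as $m\to\infty$ for fixed $n$; even restricting to blocks no smaller than the natural grid scale $1/n$ forces $1-\gamma\leq\beta$, which fails for all $\beta\leq 1/2$. In short, the moment bound alone does not control increments at fine scales. This is precisely why the paper does not use a pure moment criterion: it runs a hybrid chaining argument (Lemma~3.7 of \citep{durieu16infinite}) combining the $2p$-moment bound \eqref{eq:lem2}, effective at coarse scales, with the almost sure increment bound \eqref{eq:lem3}, $\abs{\wb M^\vvdelta_{n\vvt}-\wb M^\vvdelta_{n\vvt'}}\leq N(n(t_1+\eta))-N(nt_1)+n\eta$, which controls the finest scales directly through the Poisson clock. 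To close your proof you need this second ingredient (or some equivalent almost sure or exponential control of small increments); as written, the supremum over increments below the scale where $d_n^{-1}\phi$ dominates is not controlled.
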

\begin{proof}
We continue to use the notation in the proof of  Lemma \ref{lem:M}.
For $\vvs,\vvt\in[0,1]^d$, 
\begin{align*}
\Cov(\wt M_{n\vvt}^\vvdelta,\wt M_{n\vvs}^\vvdelta)  & = \sif k1\Cov\pp{\inddd{\vec N_k(n\vvt)=\vvdelta\mmod2},\inddd{\vec N_k(n\vvs)=\vvdelta\mmod2}}\\
& = \int_0^\infty \Cov\pp{\inddd{\vec N(n\vvt/x)=\vvdelta\mmod2},\inddd{\vec N(n\vvs/x)=\vvdelta\mmod2}}\nu(dx).
\end{align*}
Since $d_n=\nu(n)$, we can use the same argument 
 as in the proof of Lemma \ref{lem:M} to show that 
\begin{align*}
  \limn\frac{\Cov(\wt M_{n\vvt}^\vvdelta,\wt M_{n\vvs}^\vvdelta)}{d_n} & = \int_0^\infty \Cov\pp{\indd{\vec N(r\vvt)=\vvdelta\mmod2},\indd{\vec N(r\vvs)=\vvdelta\mmod2}}\beta r^{-\beta-1}dr\\ & = \Cov(\M_\vvt^\vvdelta,\M_\vvs^\vvdelta).
\end{align*}
Write
\[
\wb M_{n\vvt}^\vvdelta = \wt M_{n\vvt}^\vvdelta - \esp \wt M_{n\vvt}^\vvdelta.
\]
For each $n$, $\wb M_{n\vvt}^\vvdelta$ is the sum of independent
random variables that are centered and uniformly bounded (by
2). Further, $\sqrt{d_n}\to\infty$ as $n\to\infty$. Therefore, the
Lindeberg--Feller condition holds. Together with 
 the Cram\'er--Wold's device, this shows the convergence of the
 finite-dimensional distributions in the statement of the
 proposition. 

It remains to prove the tightness. Letting 
\[
A_\eta:=\ccbb{(\vvt,\vvt')\in[0,1]^d\times[0,1]^d:\max_{j=1,\dots,d}|t_j-t_j'|\leq\eta}, 
\]
it is enough to  prove that for all $\epsilon>0$,
\equh\label{eq:tightnessM}
\lim_{\eta\downarrow0}\limsupn\proba\pp{\sup_{(\vvt,\vvt')\in A_\eta}\frac{|\wb M_{n\vvt}^\vvdelta - \wb M_{n\vvt'}^\vvdelta|}{\sqrt{d_n}}>\epsilon} = 0.
\eque
It is easy to see that it suffices to show~\eqref{eq:tightnessM} with $A_\eta$ replaced by 
\[
A_\eta\topp i:=\ccbb{(\vvt,\vvt')\in[0,1]^d\times[0,1]^d:0\le t_i\le
  t_i'\le t_i+\eta, \, t_j = t_j', \, j\neq i},
\]
for all $i=1,\dots,d$, and we prove \eqref{eq:tightnessM} for
$A_\eta\topp1$.
By \citep[Lemma 3.7]{durieu16infinite} which is due to a chaining argument, it suffices to establish the following: 
for all $(\vvt,\vvt')\in A_\eta\topp1$, 
\equh\label{eq:lem3}
\abs{\wb M_{n\vvt}^\vvdelta - \wb M_{n\vvt'}^\vvdelta}\leq N(n(t_1+\eta))-N(nt_1)+ n\eta \quad\mbox{ almost surely,}
\eque
and
for all $p\in\N$ and $\gamma\in(0,\beta)$, there exists $C_{p,\gamma}>0$ such that for all $(\vvt,\vvt')\in A_\eta\topp1$, 
\equh\label{eq:lem2}
\esp\abs{\wb M_{n\vvt}^\vvdelta-\wb M_{n\vvt'}^\vvdelta}^{2p}\leq C_{p,\gamma}\pp{|t_1-t_1'|^{\gamma p}\nu(n)^{p} + |t_1-t_1'|^\gamma \nu(n)}.
\eque
Fix $(\vvt,\vvt')\in A_\eta\topp1$ and to simplify the notation introduce $B_k = \{N_k(n\vvt) = \vvdelta \mmod 2\}$ and $B_k' = \{N_k(n\vvt') = \vvdelta \mmod2\}$. We have
\begin{align*}
B_k\Delta B_k' \subset \ccbb{N_k(nt_1') - N_k(nt_1)\ne 0}.
\end{align*}
To show~\eqref{eq:lem3}, it suffices to observe that
\begin{align*}
\abs{\wb M^\vvdelta_{n\vvt} -\wb M^\vvdelta_{n\vvt'}}
& = \abs{\sif k1\bigl( \ind_{B_k} - \ind_{B_k'} - \proba(B_k) +
  \proba(B_k')\bigr) } \leq\sif k1 \ind_{B_k\Delta B_k'}+\sif k1 \proba(B_k\Delta B_k')\\
& \leq N(nt_1')-N(nt_1) + \esp N(n(t_1'-t_1)) \le N(nt_1')-N(nt_1)+n\eta.
\end{align*}
To show~\eqref{eq:lem2}, 
by Rosenthal's inequality \citep{rosenthal70subspaces}, for some constant $C$ depending on $p\in\N$ only, 
\begin{align*}
&\esp\abs{\wb M_{n\vvt}^\vvdelta - \wb M_{n\vvt}^\vvdelta}^{2p} \\
&\le 
C\bb{\sif k1 \esp\abs{\ind_{B_k}-\ind_{B_k'}-\proba(B_k)+\proba(B_k')}^{2p} + \pp{\sif k1 \esp \abs{\ind_{B_k}-\ind_{B_k'}-\proba(B_k)+\proba(B_k')}^2}^p}.
\end{align*}
Since
$$
\esp|\ind_A-\ind_B-\proba(A)+\proba(B)|^{2p} \le
2^{2p-1}\Var(\ind_A-\ind_B) \leq 2^{2p-1}\proba(A\Delta B)\,,
$$
the above expression does not exceed 
\[C\bb{\sif k1 \proba\pp{B_k\Delta B_k'} + \pp{\sif k1 \proba\pp{B_k \Delta B_k'}}^p} \le C\bb{V(n(t_1'-t_1)) +V(n(t_1'-t_1))^p}
\]
with
\[
V(t) = \sif k1 \proba(N_k(t)\ne 0) = \sif k1\pp{1-e^{-p_kt}}.
\]
By  \citep[Lemma 3.1]{durieu16infinite}, for each
$\gamma\in(0,\beta)$, there exists $C_\gamma$ such that  
\[
V(nt)\le C_\gamma t^\gamma \nu(n) \mbox{ for all } t\in[0,1], \, n\in\N. 
\]
Therefore, \eqref{eq:lem2} follows. 
Therefore, we have proved \eqref{eq:tightnessM}, which also implies that the limit process has a version with continuous sample path \citep[Theorem 5.6]{straf72weak}. 
This completes the proof.
\end{proof}
\begin{proof}[Proof of Theorem \ref{thm:M}]
We will prove the second part of the theorem using a multivariate 
version of the change-of-time lemma from
\citep[p.~151]{billingsley99convergence} (the proof of which is the same as that
of the univariate version). Since the limiting random field is
continuous, the first part will then follow. 

Let $(\tau_n)_{n\in\N_0}$ be the arrival times of the Poisson process
$N$ (with $\tau_0 = 0$). For $\vvt\in \R_+^d$, set
$ \vv\tau_{\floor{n\vvt}} = (\tau_{\floor{nt_1}},\dots,\tau_{\floor{nt_d}})\in\R_+^d$. 
By the strong law of large numbers and monotonicity, 
 \[
 \lambda_n\equiv \pp{\frac{\vv\tau_{\floor {n\vvt}}}{n}\wedge\vv2}_{\vvt\in[0,2]^d}\to \vv{\id}\quad \mbox{ almost surely,}
 \]
as $n\to\infty$, where $\vv\id$ is the identity function from $[0,2]^d$ to $[0,2]^d$.
By the multivariate change-of-time lemma, 
 \[
\pp{\frac{\wb M_{n((\vv\tau_{\floor{n\vvt}}/n)\wedge \vv2)}^\vvdelta}{\sqrt{d_n}}}_{\vvt\in[0,2]^d}\weakto\pp{\M_\vvt^{\vvdelta}}_{\vvt\in[0,2]^d}.
 \]
In particular, the convergence also holds if the random fields are
restricted to $\vvt\in[0,1]^d$. Since 
 \[
 \pp{\frac{M^\vvdelta_{\floor{n\vvt}} - \esp M_{\floor{n\vvt}}^\vvdelta}{\sqrt{d_n}}}_{\vvt\in[0,1]^d} = \pp{\frac{\wb M_{n (\vv\tau_{\floor{n\vvt}}/n)}^\vvdelta}{\sqrt{d_n}}}_{\vvt\in[0,1]^d},
 \]
and 
 \[
\limn \proba\pp{\pp{\frac{\vv\tau_{\floor{n\vvt}}}n \wedge
    \vv2}_{\vvt\in[0,1]^d} =
  \pp{\frac{\vv\tau_{\floor{n\vvt}}}n}_{\vvt\in[0,1]^d}} = 1, 
 \]
the desired result follows.  
\end{proof}

\section{Proof of Theorem \ref{thm:1}}\label{sec:proof}

\begin{proof}[Proof of convergence of finite-dimensional
  distributions]
Let $d\geq 1$ and fix $\vvt=(t_1,\ldots, t_d)\in [0,1]^d$ and
$\vva=(a_1,\ldots,a_d)\in \R^d$.  Note that  
\begin{align}\label{eq:limitfdd}
\esp\exp\pp{i\summ j1da_j\U^{\alpha,\beta}_{t_j}}
& = \exp\pp{-\int_{\R_+\times\Omega'}\abs{\summ
  j1da_j\indd{N'(r{t_j})\ \rm odd}}^\alpha m_\beta(dr)\, \proba'(d\omega')}\\
& =
  \exp\pp{-\int_{\R_+\times\Omega'}\sum_{\vvdelta\in\Lambda_d}\abs{\ip{\vva,\vvdelta}}^\alpha\inddd{\vec
  N'(r\vvt) = \vvdelta\mmod 2}\beta r^{-\beta-1}dr\, d\proba'}\nonumber\\
&= \exp\pp{-\sum_{\vvdelta\in\Lambda_d}\abs{\ip{\vva,\vvdelta}}^\alpha m_\vvt^\vvdelta}.\nonumber
\end{align}
Similarly, with  the notation $n_j = \floor{nt_j}$,
\begin{align*}
\esp\exp\pp{i\summ j1d a_j\frac{U_{n_j}}{b_n}} & = \esp\exp\pp{i\summ j1d\frac{a_j}{b_n}\sif k1\varepsilon_k\inddd{Y_{n_j,k}\ \rm odd}}\nonumber\\
& = \esp\exp\pp{i\sum_{\vv\delta\in\Lambda_d}\frac{\sip{\vv a,\vv\delta}}{b_n}\sif k1\varepsilon_k\prodd j1d\indd{Y_{n_j,k}=\delta_j\mmod 2}}.
\end{align*}
Let $\calY$ denote the $\sigma$-algebra generated by $(Y_i)_{i\in\N}$.  Then, the above expression becomes
\[
\esp\ccbb{\prod_{\vv\delta\in\Lambda_d}\esp\bb{\exp\pp{i\frac{\ip{\vv a,\vv \delta}}{b_n}\summ \ell1{M_{\floor{n\vvt}}^{\vv\delta}}\varepsilon_\ell}\mmid\calY}} = \esp\pp{\prod_{\vv\delta\in\Lambda_d}\phi\pp{\frac{\ip{\vv a,\vvdelta}}{b_n}}^{M_{\floor{n\vvt}}^\vvdelta}},
\]
with $\phi(\theta) = \esp\exp(i\theta\varepsilon_1)$ being the characteristic function of $\varepsilon_1$.
Therefore,
\begin{align}
\esp\exp \pp{i\summ j1da_j\frac{U_{n_j}}{b_n}} & = \esp\pp{\prod_{\vvdelta\in\Lambda_d}\phi\pp{\frac{\ip{\vva,\vvdelta}}{b_n}}^{M_{\floor{n\vvt}}^\vvdelta}} \nonumber = \esp\exp\pp{\sum_{\vvdelta\in\Lambda_d}M_{\floor{n\vvt}}^\vvdelta\log\phi\pp{\frac{\ip{\vva,\vvdelta}}{b_n}}} \nonumber\\
& = \esp\exp\pp{-\sum_{\vvdelta\in\Lambda_d}\frac{M_{\floor{n\vvt}}^\vvdelta}{b_n^\alpha} \sigma_{\varepsilon}^\alpha\abs{\ip{\vv a,\vvdelta}}^\alpha \frac{\log\phi(c_n)}{-\sigma_{\varepsilon}^\alpha| c_n|^\alpha}},\label{eq:chfM}
\end{align}
with $c_n:=\ip{\vv a,\vvdelta}/b_n$. Recall that assumption~\eqref{eq:RVepsilon}
implies that (see e.g.~\citep[Theorem 8.1.10]{bingham87regular})
\[
\log\phi(\theta)\sim \phi(\theta)-1\sim -\sigma_{\varepsilon}^\alpha|\theta|^\alpha \mbox{ as } \theta\to0.
\]
By Theorem \ref{thm:M}  and  dominated convergence theorem the 
expression in \eqref{eq:chfM} converges to the characteristic function
in \eqref{eq:limitfdd}. 
\end{proof}
In the remainder of this section we consider the case $\alpha\in(0,1)$
and prove the tightness of the
sequence of processes $(U_{\floor{nt}}/b_n)_{t\in[0,1]}$, $n\ge 1$ in
the $J_1$-topology on the space $D([0,1])$. For $\kappa>0$ we
decompose   $ U_n=U^{+,\kappa}_n+ U_n^{-,\kappa}$   with
\begin{align*}
U_n^{+,\kappa}& =\sum_{k\ge 1} \varepsilon_k\indd{|\varepsilon_k|>\kappa b_n}\indd{Y_{n,k}\text{ odd}}\\
 U_n^{-,\kappa}& =\sum_{k\ge 1} \varepsilon_k\indd{|\varepsilon_k|\le\kappa b_n}\indd{Y_{n,k}\text{ odd}}.
\end{align*}

We start by showing that for any $\alpha\in(0,2)$ and any $\kappa>0$,
the sequence of the laws of the processes
$(U^{+,\kappa}_{\floor{nt}}/{b_n})_{t\in[0,1]}$, $n\ge 1$, is tight in
the $J_1$-topology. To this end define 
\[
\mathcal T\topp\kappa(n)  = \ccbb{i\in\{1,\dots,n\}:Y_i = k \
  \text{for $k$ such that $|\varepsilon_k|>\kappa b_n$}}.
\]
\begin{lemma}
For any $\kappa>0$,
\[
 \lim_{\delta\downarrow 0}\limsup_{n\to\infty} \P\left(\exists\,
   i_1,i_2 \in \mathcal T^{(\kappa)}(n) \mbox{ such that }
   |i_1-i_2|<n\delta\right) =0. 
\]
\end{lemma}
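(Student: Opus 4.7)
Fix $\kappa,\delta>0$, write $S_n=\{k:|\varepsilon_k|>\kappa b_n\}$ so that $\mathcal T^{(\kappa)}(n)=\{i\le n:Y_i\in S_n\}$, and note $u_n:=\P(|\varepsilon_1|>\kappa b_n)\sim C_\varepsilon\kappa^{-\alpha}/(n^\beta L(n))$ by~\eqref{eq:RVepsilon} and the definition of $b_n$. The plan is to split the urns at the threshold $y_n:=1/(n\sqrt\delta)$, letting $A_n:=\{k:p_k\ge y_n\}$ be the ``heavy'' urns and $B_n:=\N\setminus A_n$ the ``light'' ones, and to decompose
\[
\bigl\{\exists\,i_1\ne i_2\in\mathcal T^{(\kappa)}(n):|i_1-i_2|<n\delta\bigr\}\subset\mathcal E_0\cup\mathcal E_1\cup\mathcal E_2,
\]
where $\mathcal E_0:=\{A_n\cap S_n\ne\emptyset\}$; on $\mathcal E_0^c$ every large urn is light, so a close pair can only arise from two balls in the same light large urn (event $\mathcal E_1$) or from balls in two distinct light large urns (event $\mathcal E_2$).

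I would bound each event by a union argument using the independence of $(Y_n)_n$ and $(\varepsilon_k)_k$, together with the regular variation of $\nu$: from $\nu(t)=t^\beta L(t)$ one has $|A_n|=\nu(n\sqrt\delta)\sim(n\sqrt\delta)^\beta L(n\sqrt\delta)$, and Karamata-style summation by parts yields, as $y\downarrow 0$,
\[
\sum_{p_k<y}p_k\sim\frac{\beta}{1-\beta}\,y^{1-\beta}L(1/y),\qquad\sum_{p_k<y}p_k^2\sim\frac{\beta}{2-\beta}\,y^{2-\beta}L(1/y).
\]
With these estimates, a union bound gives $\P(\mathcal E_0)\le|A_n|u_n\sim C\delta^{\beta/2}$. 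For $\mathcal E_1$, the number of ordered index pairs $(i_1,i_2)$ with $|i_1-i_2|<n\delta$ is at most $n^2\delta$, and $\P(Y_{i_1}=Y_{i_2}=k)=p_k^2$, so combining with $\P(k\in S_n)=u_n$,
\[
\P(\mathcal E_1)\le u_n n^2\delta\sum_{k\in B_n}p_k^2\sim C'\delta^{\beta/2},
\]
the min with $1$ being unnecessary since $p_k<y_n$ forces $n^2\delta p_k^2<1$. For $\mathcal E_2$, using $\P(Y_{i_1}=k_1,Y_{i_2}=k_2)=p_{k_1}p_{k_2}$ and $\P(k_1,k_2\in S_n)=u_n^2$ for $k_1\ne k_2$,
\[
\P(\mathcal E_2)\le 2u_n^2 n^2\delta\biggl(\sum_{k\in B_n}p_k\biggr)^2\sim C''\delta^\beta.
\]
Summing and taking $\limsup$ in $n$, I obtain $\limsup_{n\to\infty}\P(\cdots)\le C\delta^{\beta/2}+O(\delta^\beta)$, which tends to $0$ as $\delta\downarrow 0$.

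The main obstacle is the choice of threshold $y_n$: the naive bound that ignores the heavy/light split, namely $\P(\cdots)\le n^2\delta\,\esp[q_n^2]$ with $q_n:=\sum_{k\in S_n}p_k$, is of order $n^{2-\beta}\delta/L(n)$ and therefore diverges. The tuning $y_n=1/(n\sqrt\delta)$ is chosen so that the heavy-urn probability $|A_n|u_n$ and the within-urn contribution $u_nn^2\delta\sum_{B_n}p_k^2$ balance at order $\delta^{\beta/2}$, while the between-urn contribution is strictly smaller, of order $\delta^\beta$, thanks to the fact that the light mass $\sum_{B_n}p_k$ is itself small when $\beta<1$. The only nontrivial computational ingredient is verifying the regular-variation asymptotics above, which is done in the same spirit as in the proof of Lemma~\ref{lem:M}.
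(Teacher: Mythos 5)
Your argument is correct, and it reaches the conclusion by a genuinely different route than the paper. The paper also isolates the heavy urns, but with a $\delta$-independent cutoff $p_k\ge 1/(cn)$: it shows that with probability at least $1-\epsilon$ the total sampling mass of ``activated'' urns, $\Theta_n=\sum_k\indd{|\varepsilon_k|>\kappa b_n}p_k$, is at most $C/n$ (the heavy urns are discarded because none of the $O(\nu(cn))$ of them is activated with probability close to $e^{-c^\beta C_\varepsilon\kappa^{-\alpha}}$, and the light-urn contribution is handled by Karamata plus Markov). Conditionally on $\Theta_n\le C/n$, the set $\mathcal T^{(\kappa)}(n)$ is then dominated by an i.i.d.\ Bernoulli$(C/n)$ sequence, and the lemma reduces to the soft fact that a rate-$C$ Poisson process on $[0,1]$ rarely has two points within $\delta$. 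You instead run a direct union bound over index pairs, which forces you to split same-urn and distinct-urn collisions and to tune the cutoff to $y_n=1/(n\sqrt\delta)$ so that the heavy-urn and same-urn terms balance at order $\delta^{\beta/2}$; your diagnosis that the untruncated second-moment bound $n^2\delta\,\esp[q_n^2]$ diverges (its dominant term being exactly the same-heavy-urn contribution) is the right reason why some split is unavoidable. What each approach buys: yours is more quantitative (an explicit $O(\delta^{\beta/2})$ rate, no Poisson-limit comparison needed), at the cost of a $\delta$-dependent threshold and three separate Karamata estimates; the paper's is conceptually cleaner in that all the regular-variation work is concentrated in the single statement $\Theta_n\lesssim 1/n$, with the $\delta$-dependence deferred to a qualitative separation property of Poisson points. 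Both rest on the same two inputs, the regular variation of $\nu$ and the tail \eqref{eq:RVepsilon}, and both implicitly read the statement with $i_1\neq i_2$, as intended.
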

\begin{proof}
Fix $\kappa>0$ and let $\epsilon>0$. 
Let $\Theta_{n}=\sum_{k\ge 1}\indd{|\varepsilon_k|>\kappa b_n}p_k$,
$n\geq 1$. We first prove that
\begin{equation}\label{eq:Theta}
\exists\; C=C(\epsilon,\kappa)<+\infty \mbox{ such that }\limsup_{n\to\infty}\P(\Theta_{n}>C/n)\le\epsilon.
\end{equation}
To see this, choose $c>0$. Recalling the notation $\nu(x) = x^\beta L(x)$ 
we consider $\Theta_n^{(c)}=\sum_{k>\nu(c n)}\indd{|\varepsilon_k|>\kappa
  b_n}p_k$. Note that  by \eqref{eq:RVepsilon}, 
\begin{align*}
 \P(\Theta_n^{(c)}\ne \Theta_n)
 &\le  \P(\exists\, k\le \nu(cn) \mbox{ such that }|\varepsilon_k|>\kappa b_n)\\
 &=1- (1-\P(|\varepsilon_1|>\kappa b_n))^{[\nu(cn)]}\to 1-e^{-c^\beta
   C_\varepsilon \kappa^{-\alpha}}, \mmas n\to\infty, 
\end{align*}
and so we can choose $c=c(\epsilon, \kappa)$ such that 
\begin{equation}\label{eq:Theta^c}
\limsup_{n\to\infty} \P(\Theta_n^{(c)}\ne \Theta_n)\le \epsilon/2.
\end{equation}
Further, 
\[
\sum_{k> \nu(cn)} p_k = \sum_{k\ge 1} p_k \indd{p_k< 1/cn}=\int_{(cn,\infty)} \frac1x \nu(dx)\sim \frac{\beta}{1-\beta}(cn)^{\beta-1} L(n), 
 \mmas n\to\infty,
\]
where the equivalence is due to integration by parts and an
application of a Karamata Theorem (see \citep[Theorem 1
p.~281]{feller71introduction}). 
Thus,
\[
  \esp \Theta_n^{(c)}\sim \frac{\beta}{1-\beta}(cn)^{\beta-1} L(n)\P(|\varepsilon_1|>\kappa b_n)
  \sim  \frac{\beta}{1-\beta} c^{\beta-1} C_\varepsilon\kappa^{-\alpha} n^{-1}, \mmas n\to\infty.
\]
Now \eqref{eq:Theta} follows from the Markov inequality and
\eqref{eq:Theta^c}.  By \eqref{eq:Theta},
\begin{multline*}
\limsup_{n\to\infty} \proba\left(\exists\, i_1, i_2 \in \mathcal T^{(\kappa)}(n) \mbox{ such that } |i_1-i_2|<n\delta\right) \\
\le \limsup_{n\to\infty} \proba\pp{\exists\, i_1, i_2 \in \mathcal T^{(\kappa)}(n) \mbox{ such that } |i_1-i_2|<n\delta \mmid \Theta_n\le C/n} + \epsilon.
\end{multline*}
Letting $(B_i\topp n)_{i\in\N}$ be i.i.d.~Bernoulli random variables
with parameter $C/n$, we can bound the 
 first   term above by 
\[
\limsupn \proba\pp{\exists\, i_1,i_2\in\{1,\dots,n\} \mbox{ such that } |i_1-i_2|<n\delta \mand B_{i_1}\topp n = B_{i_2}\topp n = 1}.
\]
The latter probability has a limit, equal to 
  the probability that a Poisson point process with intensity $C$ over
  $[0,1]$ has two points  less than $\delta$ apart. 
This probability goes to zero as $\delta\downarrow 0$. This completes the proof.
\end{proof}

\begin{proposition}\label{prop:+tight}
 For all $\alpha\in(0,2), \kappa>0$, the sequence of processes
 $(U^{+,\kappa}(\floor{nt})/{b_n})_{t\in[0,1]}$, $n\ge 1$, is tight in
 the $J_1$-topology on $D([0,1])$. 
\end{proposition}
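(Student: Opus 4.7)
The plan is to apply the classical two-part criterion for tightness in the $J_1$-topology on $D([0,1])$ (see e.g.~\citep{billingsley99convergence}, Theorem 13.2) to the sequence $X_n(t):=U_{\floor{nt}}^{+,\kappa}/b_n$. I must verify (a) tightness of $(\sup_{t\in[0,1]}|X_n(t)|)_{n\ge 1}$ in $\R$, and (b) $\lim_{\delta\downarrow 0}\limsup_n \proba(w''(X_n,\delta)>\epsilon)=0$ for every $\epsilon>0$, where $w''$ is the usual $J_1$-modulus of continuity. Both parts hinge on the structural observation that, because the truncation threshold $\kappa b_n$ does not depend on $t$, the path $t\mapsto X_n(t)$ is a step function whose jump times are exactly $\{i/n:i\in\mathcal T^{(\kappa)}(n)\}$, with jump magnitudes $|\varepsilon_{Y_i}|/b_n$.

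For (b) the preceding lemma essentially does all the work. On the event $A_{n,\delta}$ that any two distinct elements of $\mathcal T^{(\kappa)}(n)$ are separated by at least $n\delta$, every interval $[t_1,t_2]\subset[0,1]$ of length less than $\delta$ contains at most one jump time of $X_n$; hence for every $t\in[t_1,t_2]$, either $X_n(t)=X_n(t_1)$ or $X_n(t)=X_n(t_2)$, so $\min(|X_n(t)-X_n(t_1)|,|X_n(t_2)-X_n(t)|)=0$. Thus $w''(X_n,\delta)=0$ on $A_{n,\delta}$, and the lemma yields $\lim_{\delta\downarrow 0}\limsup_n \proba(A_{n,\delta}^c)=0$.

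For (a) I would use the crude bound
\[
\sup_{t\in[0,1]}|X_n(t)|\le S_n:=\sum_{k\ge 1}\frac{|\varepsilon_k|}{b_n}\ind\{|\varepsilon_k|>\kappa b_n\}\ind\{Y_{n,k}\ge 1\}
\]
and prove tightness of $(S_n)$ by a two-level truncation. For fixed $M>\kappa$, split $S_n=S_n^{(1)}+S_n^{(2)}$ according to whether $|\varepsilon_k|\in(\kappa b_n,Mb_n]$ or $|\varepsilon_k|>Mb_n$. Each nonzero summand in $S_n^{(1)}$ is bounded by $M$, so $S_n^{(1)}\le M\cdot K_n^{(\kappa)}$, where $K_n^{(\kappa)}$ is the number of visited urns with $|\varepsilon_k|>\kappa b_n$; a direct calculation combining $\proba(|\varepsilon_1|>\kappa b_n)\sim C_\varepsilon \kappa^{-\alpha}b_n^{-\alpha}$, the Karlin estimate $\sum_k(1-(1-p_k)^n)\sim \Gamma(1-\beta)d_n$, and $b_n^\alpha=d_n$ shows $\esp K_n^{(\kappa)}$ stays bounded, so $K_n^{(\kappa)}$ is tight. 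For the heavy part, a union bound gives $\proba(S_n^{(2)}>0)\le \proba(|\varepsilon_1|>Mb_n)\sum_k(1-(1-p_k)^n)\to C_\varepsilon M^{-\alpha}\Gamma(1-\beta)$, which is made arbitrarily small by taking $M$ large.

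The main obstacle is the control of $S_n$ in the regime $\alpha\le 1$: since $\esp[|\varepsilon_1|\ind\{|\varepsilon_1|>\kappa b_n\}]$ is then infinite, a direct Markov bound on $\esp S_n$ is unavailable. The two-level truncation bypasses this by estimating the moderate-magnitude summands through a count whose expectation remains bounded (each visited urn is counted at most once), and the very-large-magnitude summands through a union bound on probabilities, which relies on regular variation rather than any integrability of $\varepsilon_1$.
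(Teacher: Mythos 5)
Your proposal is correct, and its core step is the same as the paper's: the separation lemma for $\mathcal T^{(\kappa)}(n)$ implies that, with probability tending to one, any interval of length $\delta$ contains at most one jump time of $t\mapsto U^{+,\kappa}_{\floor{nt}}/b_n$, so the $J_1$-modulus $w''$ vanishes on that event. The paper's proof consists of exactly this observation and then points to Billingsley. Where you go beyond the paper is part (a): the paper does not explicitly verify the uniform-boundedness condition $\lim_{a\to\infty}\limsup_n\proba(\sup_t|U^{+,\kappa}_{\floor{nt}}|/b_n\ge a)=0$, which is genuinely needed (a $w''$-type bound alone does not control the supremum, since the finitely many jumps could be arbitrarily large). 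Your two-level truncation handles this correctly: the count $K_n^{(\kappa)}$ of visited urns with $|\varepsilon_k|>\kappa b_n$ has bounded expectation because $\proba(|\varepsilon_1|>\kappa b_n)\sim C_\varepsilon\kappa^{-\alpha}b_n^{-\alpha}$ exactly cancels $\esp K_n\sim\Gamma(1-\beta)b_n^\alpha$, and the union bound disposes of the summands exceeding $Mb_n$; this is a worthwhile addition, and it is the right way to avoid any moment assumption on $\varepsilon_1$. Two small points. First, the criterion you invoke is really Billingsley's Theorem 13.3 (the $w''$ version), not 13.2 (the $w'$ version); Theorem 13.3 additionally requires controlling the oscillation near the endpoints, $|X_n(\delta)-X_n(0)|$ and $|X_n(1-)-X_n(1-\delta)|$, which neither you nor the paper addresses explicitly — it follows here from $\proba\bigl(\mathcal T^{(\kappa)}(\floor{n\delta})\ne\emptyset\bigr)\le\proba(|\varepsilon_1|>\kappa b_n)\,\esp K_{\floor{n\delta}}\to C_\varepsilon\kappa^{-\alpha}\Gamma(1-\beta)\delta^\beta$, which vanishes as $\delta\downarrow0$. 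Second, your reading that the truncation level is the fixed threshold $\kappa b_n$ (not $\kappa b_{\floor{nt}}$) is the one that makes the jump-time description exact, and it is consistent with how the lemma is stated; the paper's notation is slightly ambiguous on this point but the argument is the same either way.
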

\begin{proof}
Since $\sup_{s\in[r,t]}|U^{+,\kappa}_{\floor{nr}}-U^{+,\kappa}_{\floor{ns}}|=0$ as soon as $\mathcal T^{(\kappa)}(\floor{nt})\setminus\mathcal T^{(\kappa)}(\floor{nr})=\emptyset$,
from the preceding lemma we infer that for all $\eta>0$,
\[
 \lim_{\delta\downarrow0}\limsup_{n\to\infty}\P\left(\sup_{\substack{0\le r\le s \le t\le 1\\ |r-t|<\delta}}\abs{U^{+,\kappa}_{\floor{nr}}-U^{+,\kappa}_{\floor{ns}}}\wedge\abs{U^{+,\kappa}_{\floor{ns}}-U^{+,\kappa}_{\floor{nt}}}>\eta\right)=0,
\] 
which yields the tightness of $(U^{+,\kappa}_{\floor{n\cdot}})_{n\ge1}$ (see \citep{billingsley99convergence}).
\end{proof}

\begin{proof}[Proof of tightness of $\bigl((U_{\floor{nt}}/b_n)_{t\in[0,1]}\bigr)$  when
  $\alpha\in(0,1)$] 
Let $\alpha\in(0,1)$.
 In view of  Proposition~\ref{prop:+tight},  it is sufficient to show that for any $\eta>0$,
\begin{equation}\label{eq:-tight}
 \lim_{\kappa\to0}\limsup_{n\to\infty} \P\left(\sup_{t\in[0,1]} \left|\frac{U^{-,\kappa}_{\floor{nt}}}{b_n}\right|>\eta \right)=0.
\end{equation}
Note that 
\begin{align*}
\P\left(\sup_{t\in[0,1]}
  \left|\frac{U^{-,\kappa}_{\floor{nt}}}{b_n}\right|>\eta \right)& \le 
\P\left( \sum_{k\ge
  1}\left|\frac{\varepsilon_k}{b_n}\right|\indd{|\varepsilon_k|\le\kappa
  b_n}\indd{Y_{n,k}>0} >\eta \right) \\
& \le
  \P\left(\sum_{k=1}^{K_n}\left|\frac{\varepsilon_k}{b_n}\right|\indd{|\varepsilon_k|\le\kappa
  b_n}>\eta\right), 
\end{align*}
where $K_n$ is the number of nonempty boxes at time $n$ in the
infinite urn scheme. Since for large $n$,
\begin{align*}
\esp\left( \sum_{k=1}^{K_n}\left|\frac{\varepsilon_k}{b_n}\right|\indd{|\varepsilon_k|\le\kappa  b_n}\right) 
&= \esp(K_n) \esp\left(\left|\frac{\varepsilon_1}{b_n}\right|\indd{|\varepsilon_1|\le\kappa b_n}\right) \\
& \leq Cb_n^\alpha b_n^{-1}\kappa b_n \P\left( |\varepsilon_1|>\kappa
  b_n\right) \to CC_\varepsilon\kappa^{1-\alpha}
\end{align*}
(for a finite constant $C$) by \citep[Proposition~2]{gnedin07notes}
and Karamata's theorem, \eqref{eq:-tight} follows by Markov's
inequality. 
 \end{proof}
\begin{remark}
Whether or not the full weak convergence in Theorem \ref{thm:1} holds
when $\alpha\in[1,2)$ remains an open question. In this case, it is
not even clear to us whether $\U^{\alpha,\beta}$ has a c\`adl\`ag
modification: sufficient conditions are given, for example, in 
\citep[Theorem 4.3]{basseoconnor13uniform},  but they are not
satisfied here. 
\end{remark}

\section{Discussions}
There are a few limit theorems for other statistics in \citep{durieu16infinite} that we have not addressed yet. We provide a brief discussions here focusing on other processes that appear in the limit.  As for the proofs, they do not require new ideas (if one ignores the tightness issues). 

For the odd-occupancy process $U_n$ in \eqref{eq:Un}, one can write, for $\beta<\alpha$, 
\begin{align*}
U_n&  = \sif k1\varepsilon_k\inddd{Y_{n,k}\ \rm odd} = \sif k1\varepsilon_k\pp{\inddd{Y_{n,k}\ \rm odd} - \proba(Y_{n,k}\ \rm odd)} + \sif k1\varepsilon_k\proba(Y_{n,k}\ \rm odd)\\ 
& =: U_n\topp1+U_n\topp2,
\end{align*}
and one could eventually prove that
\equh\label{eq:decomposition}
\frac1{b_n}\pp{U_{\floor{nt}}, U_{\floor{nt}}\topp1,U_{\floor{nt}}\topp2}_{t\in[0,1]} \fddto\sigma_\varepsilon\pp{\U_t^{\alpha,\beta}, \U_t^{\alpha,\beta,(1)}, \U_t^{\alpha,\beta,(2)}}
\eque
as $n\to\infty$, with
\begin{align*}
\U_t^{\alpha,\beta,(1)} & = \int_{\R_+\times\Omega'}\bb{\inddd{N'(tr)(\omega')\ \rm odd}-\proba'(N'(tr)\ \rm odd)}\calM_{\alpha,\beta}(dr,d\omega')\\
\U_t^{\alpha,\beta,(2)} & = \int_{\R_+\times\Omega'}\proba'(N'(tr)\ \rm odd)\calM_{\alpha,\beta}(dr,d\omega'),
\end{align*}
where here and below $\calM_{\alpha,\beta}$ and $N'$ are as before. 
We need the constraint $\beta<\alpha$ so that $U\topp1_n$, $U\topp 2_n$, $\U_t^{\alpha,\beta,(1)}$ and $\U_t^{\alpha,\beta,(2)}$  are well defined. Such a weak convergence, for the original randomized Karlin model ($\varepsilon_k\in\{\pm1\}$ and $\alpha = 2$), has been proved in \citep{durieu16infinite}. An appealing feature is that the corresponding decomposition of 
\[
\U_t^{2,\beta} = \U_t^{2,\beta,(1)} + \U_t^{2,\beta,(2)}
\]
recovers a decomposition of fractional Brownian motion by a bi-fractional Brownian motion and another smooth self-similar Gaussian process discovered in \citep{lei09decomposition}, and in particular, in this case the two processes are independent. For $\alpha\in(0,2)$, 
the convergence of finite-dimensional distributions to the decomposition still holds, although $\U^{\alpha,\beta,(1)}$ and $\U^{\alpha,\beta,(2)}$ are no longer independent.  The convergence in \eqref{eq:decomposition} could be established by computing characteristic functions and applying the same conditioning trick. 

Another statistics considered in \citep{karlin67central,durieu16infinite} is the {\em occupancy process}
\[
Z_n := \sif k1 \varepsilon_k\inddd{Y_{n,k}>0}.
\]
Correspondingly, the limit process is 
\[
\Z^{\alpha,\beta}_t = \int_{\R_+\times\Omega'}\inddd{N'(tr)>0}\calM_{\alpha,\beta}(dr,d\omega'), \quad t\ge 0.
\]
At the same time, this is nothing but a time-changed S$\alpha$S L\'evy process, as one can verify by computing the characteristic functions that 
\[
\pp{\Z^{\alpha,\beta}_t}_{t\ge 0} \eqfdd\pp{\Z^\alpha(t^\beta)}_{t\ge 0},
\]
where $(\Z^\alpha(t))_{t\ge 0}$ is an S$\alpha$S L\'evy process ($\esp e^{i\theta\Z^\alpha(1)} = e^{-|\theta|^\alpha}, \theta\in\R$). 
A similar decomposition for $\Z^{\alpha,\beta}$, and the corresponding limit theorem as in \eqref{eq:decomposition} can also be established, again by computing characteristic functions.  
The corresponding results for the Gaussian case ($\alpha = 2$) have already been investigated in \citep[Theorem 2.1]{durieu16infinite}.

\subsection*{Acknowledgments} 
The first author would like to thank the hospitality and financial support from Taft Research Center and Department of Mathematical Sciences at University of Cincinnati, for his visits in 2016 and 2017.
The second author's research was partially supported by NSF grant
  DMS-1506783 and the ARO grant  W911NF-12-10385 at Cornell
  University.  The third author's  research was partially supported by 
the NSA grants H98230-14-1-0318 and H98230-16-1-0322, the ARO grant W911NF-17-1-0006, and Charles Phelps Taft Research Center 
at University of Cincinnati.

\bibliographystyle{apalike}
\bibliography{references}

\def\cprime{$'$} \def\polhk#1{\setbox0=\hbox{#1}{\ooalign{\hidewidth
  \lower1.5ex\hbox{`}\hidewidth\crcr\unhbox0}}}
  \def\polhk#1{\setbox0=\hbox{#1}{\ooalign{\hidewidth
  \lower1.5ex\hbox{`}\hidewidth\crcr\unhbox0}}}
\begin{thebibliography}{}

\bibitem[Aaronson, 1997]{aaronson97introduction}
Aaronson, J. (1997).
\newblock {\em An introduction to infinite ergodic theory}, volume~50 of {\em
  Mathematical Surveys and Monographs}.
\newblock American Mathematical Society, Providence, RI.

\bibitem[Alsmeyer et~al., 2017]{alsmeyer17functional}
Alsmeyer, G., Iksanov, A., and Marynych, A. (2017).
\newblock Functional limit theorems for the number of occupied boxes in the
  {B}ernoulli sieve.
\newblock {\em Stochastic Process. Appl.}, 127(3):995--1017.

\bibitem[Bahadur, 1960]{bahadur60number}
Bahadur, R.~R. (1960).
\newblock On the number of distinct values in a large sample from an infinite
  discrete distribution.
\newblock {\em Proc. Nat. Inst. Sci. India Part A}, 26(supplement II):67--75.

\bibitem[Basse-O'Connor and Rosi\'nski, 2013]{basseoconnor13uniform}
Basse-O'Connor, A. and Rosi\'nski, J. (2013).
\newblock On the uniform convergence of random series in {S}korohod space and
  representations of c\`adl\`ag infinitely divisible processes.
\newblock {\em Ann. Probab.}, 41(6):4317--4341.

\bibitem[Beran et~al., 2013]{beran13long}
Beran, J., Feng, Y., Ghosh, S., and Kulik, R. (2013).
\newblock {\em Long-memory processes}.
\newblock Springer, Heidelberg.
\newblock Probabilistic properties and statistical methods.

\bibitem[Billingsley, 1999]{billingsley99convergence}
Billingsley, P. (1999).
\newblock {\em Convergence of probability measures}.
\newblock Wiley Series in Probability and Statistics: Probability and
  Statistics. John Wiley \& Sons Inc., New York, second edition.
\newblock A Wiley-Interscience Publication.

\bibitem[Bingham et~al., 1987]{bingham87regular}
Bingham, N.~H., Goldie, C.~M., and Teugels, J.~L. (1987).
\newblock {\em Regular variation}, volume~27 of {\em Encyclopedia of
  Mathematics and its Applications}.
\newblock Cambridge University Press, Cambridge.

\bibitem[Durieu and Wang, 2016]{durieu16infinite}
Durieu, O. and Wang, Y. (2016).
\newblock From infinite urn schemes to decompositions of self-similar
  {G}aussian processes.
\newblock {\em Electron. J. Probab.}, 21:Paper No. 43, 23.

\bibitem[Durieu and Wang, 2019]{durieu19random}
Durieu, O. and Wang, Y. (2019).
\newblock From random partitions to fractional {B}rownian sheets.
\newblock {\em Bernoulli}, 25(2):1412--1450.

\bibitem[Feller, 1971]{feller71introduction}
Feller, W. (1971).
\newblock {\em An introduction to probability theory and its applications.
  {V}ol. {II}.}
\newblock Second edition. John Wiley \& Sons Inc., New York.

\bibitem[Gnedin et~al., 2007]{gnedin07notes}
Gnedin, A., Hansen, B., and Pitman, J. (2007).
\newblock Notes on the occupancy problem with infinitely many boxes: general
  asymptotics and power laws.
\newblock {\em Probab. Surv.}, 4:146--171.

\bibitem[Gnedin et~al., 2010]{gnedin10limit}
Gnedin, A., Iksanov, A., and Marynych, A. (2010).
\newblock Limit theorems for the number of occupied boxes in the {B}ernoulli
  sieve.
\newblock {\em Theory Stoch. Process.}, 16(2):44--57.

\bibitem[Houdr{\'e} and Villa, 2003]{houdre03example}
Houdr{\'e}, C. and Villa, J. (2003).
\newblock An example of infinite dimensional quasi-helix.
\newblock In {\em Stochastic models ({M}exico {C}ity, 2002)}, volume 336 of
  {\em Contemp. Math.}, pages 195--201. Amer. Math. Soc., Providence, RI.

\bibitem[Karlin, 1967]{karlin67central}
Karlin, S. (1967).
\newblock Central limit theorems for certain infinite urn schemes.
\newblock {\em J. Math. Mech.}, 17:373--401.

\bibitem[Krengel, 1985]{krengel85ergodic}
Krengel, U. (1985).
\newblock {\em Ergodic theorems}, volume~6 of {\em de Gruyter Studies in
  Mathematics}.
\newblock Walter de Gruyter \& Co., Berlin.
\newblock With a supplement by Antoine Brunel.

\bibitem[Lei and Nualart, 2009]{lei09decomposition}
Lei, P. and Nualart, D. (2009).
\newblock A decomposition of the bifractional {B}rownian motion and some
  applications.
\newblock {\em Statist. Probab. Lett.}, 79(5):619--624.

\bibitem[Neuhaus, 1971]{neuhaus71weak}
Neuhaus, G. (1971).
\newblock On weak convergence of stochastic processes with multidimensional
  time parameter.
\newblock {\em Ann. Math. Statist.}, 42:1285--1295.

\bibitem[Owada and Samorodnitsky, 2015]{owada15functional}
Owada, T. and Samorodnitsky, G. (2015).
\newblock Functional central limit theorem for heavy tailed stationary
  infinitely divisible processes generated by conservative flows.
\newblock {\em Ann. Probab.}, 43(1):240--285.

\bibitem[Pipiras and Taqqu, 2017a]{pipiras17long}
Pipiras, V. and Taqqu, M.~S. (2017a).
\newblock {\em Long-range dependence and self-similarity}, volume~45 of {\em
  Cambridge Series in Statistical and Probabilistic Mathematics}.
\newblock Cambridge University Press.

\bibitem[Pipiras and Taqqu, 2017b]{pipiras17stable}
Pipiras, V. and Taqqu, M.~S. (2017b).
\newblock {\em Stable non-{G}aussian self-similar processes with stationary
  increments}.
\newblock Springer.

\bibitem[Pitman, 2006]{pitman06combinatorial}
Pitman, J. (2006).
\newblock {\em Combinatorial stochastic processes}, volume 1875 of {\em Lecture
  Notes in Mathematics}.
\newblock Springer-Verlag, Berlin.
\newblock Lectures from the 32nd Summer School on Probability Theory held in
  Saint-Flour, July 7--24, 2002, With a foreword by Jean Picard.

\bibitem[Rosenthal, 1970]{rosenthal70subspaces}
Rosenthal, H.~P. (1970).
\newblock On the subspaces of {$L^{p}$} {$(p>2)$} spanned by sequences of
  independent random variables.
\newblock {\em Israel J. Math.}, 8:273--303.

\bibitem[Rosi{\'n}ski, 1995]{rosinski95structure}
Rosi{\'n}ski, J. (1995).
\newblock On the structure of stationary stable processes.
\newblock {\em Ann. Probab.}, 23(3):1163--1187.

\bibitem[Samorodnitsky, 2005]{samorodnitsky05null}
Samorodnitsky, G. (2005).
\newblock Null flows, positive flows and the structure of stationary symmetric
  stable processes.
\newblock {\em Ann. Probab.}, 33(5):1782--1803.

\bibitem[Samorodnitsky, 2016]{samorodnitsky16stochastic}
Samorodnitsky, G. (2016).
\newblock {\em Stochastic processes and long range dependence}.
\newblock Springer, Cham, Switzerland.

\bibitem[Samorodnitsky and Taqqu, 1994]{samorodnitsky94stable}
Samorodnitsky, G. and Taqqu, M.~S. (1994).
\newblock {\em Stable non-{G}aussian random processes}.
\newblock Stochastic Modeling. Chapman \& Hall, New York.
\newblock Stochastic models with infinite variance.

\bibitem[Spitzer, 1964]{spitzer64principles}
Spitzer, F. (1964).
\newblock {\em Principles of random walk}.
\newblock The University Series in Higher Mathematics. D. Van Nostrand Co.,
  Inc., Princeton, N.J.-Toronto-London.

\bibitem[Straf, 1972]{straf72weak}
Straf, M.~L. (1972).
\newblock Weak convergence of stochastic processes with several parameters.
\newblock In {\em Proceedings of the {S}ixth {B}erkeley {S}ymposium on
  {M}athematical {S}tatistics and {P}robability ({U}niv. {C}alifornia,
  {B}erkeley, {C}alif., 1970/1971), {V}ol. {II}: {P}robability theory}, pages
  187--221. Univ. California Press, Berkeley, Calif.

\end{thebibliography}
\end{document}